\numberwithin{equation}{section} 
\numberwithin{figure}{section} 
\theoremstyle{plain}
 \theoremstyle{definition}
 \newtheorem*{defn*}{Definition}
  \theoremstyle{remark}
  \newtheorem*{claim*}{Claim}
\theoremstyle{plain}
\newtheorem{thm}{Theorem}
  \theoremstyle{remark}
  \newtheorem*{rem*}{Remark}
  \theoremstyle{plain}
  \newtheorem{lem}[thm]{Lemma}
  \theoremstyle{remark}
  \newtheorem{rem}[thm]{Remark}
  \theoremstyle{plain}
  \newtheorem{fact}[thm]{Fact}
\begin{document}
\global\long\def\bbZ{\mathbb{Z}}

\global\long\def\bbN{\mathbb{N}}

\global\long\def\bbX{\mathbb{X}}

\global\long\def\cF{\mathcal{F}}

\global\long\def\cB{\mathcal{B}}

\global\long\def\fT{\mathfrak{\tau}}

\global\long\def\cCT{\mathcal{C}_{\fT-1}}

\global\long\def\cC{\mathcal{C}}

\title{On A type ${\rm III}_{1}$ bernoulli Shift}

\author{Zemer Kosloff (tel aviv)}

\thanks{This research was was supported by THE ISRAEL SCIENCE FOUNDATION
grant No. 114/08.}

\subjclass[2000]{37A40 and 37A55.}
\begin{abstract}
We provide a construction of a product measure under which the shift
on $\left\{ 0,1\right\} ^{\mathbb{Z}}$ is a type ${\rm III}_{1}$
transformation. 
\end{abstract}
\maketitle

\subsection{Introduction: }

A conservative non-singular transformation $T$ of the probability
space $\left(X,\cB,P\right)$ for which there exists no $\sigma$-finite
measure which is $P$-equivalent and $T$-invariant is said to be
of type ${\rm III}$ \cite{KW,Kri}. 

The first constructions of type ${\rm III}$ tranformations were given
in \cite{Orn,Arn}. Both constructions are odometers and hence don't
have the $K$-property. 

In \cite{Ham}, Hamachi introduced a class of product measures $P={\displaystyle \prod_{k=-\infty}^{\infty}}P_{k}$
under which the full shift is a type ${\rm III}$ transformation.
Thus obtaining the first examples of type ${\rm III}$ transformations
which satisfy the $K$-property. 

The classification of the type ${\rm III}$ dynamical systems was
refined by Krieger \cite{Kri} and Araki-Woods who introduced the
ratio set {[}see below{]}. Thus the type ${\rm III}$ transformations
can be further subdivided into type ${\rm III}_{\lambda}$, with $0\leq\lambda\leq1$.
Krieger showed \cite{Kri,KW} that for each $0<\lambda\leq1$ there
is a unique orbit equivalence class. 

In this work we construct a product measure $P={\displaystyle \prod_{k=-\infty}^{\infty}}P_{k}$
under which the full shift is of type ${\rm III}_{1}$. 

\textit{Acknowledgments.} This work is a part of the authors master
thesis done under the supervision of Prof. Jon Aaronson. The author
would like to thank Prof. Jon Aaronson for his time, patience and
helpful suggestions.

\subsection{Preliminaries. }

Let $\left(X,\cB,P\right)$ be a probability space. A measurable transformation
$T:X\to X$ is said to be \textit{non-singular} with respect to $P$
if it preserves the $P-$null sets. i.e for every $A\in\cB,$ if $P(A)=0$
then \[
P\circ T(A):=P\left(TA\right)=0.\]
A measure $m$ on $X$ is said to be $T-$\textit{invariant} if for
every\textit{ $A\in\cB$, \[
P\circ T\left(A\right)=P(A).\]
}The class of non-singular transformations correspond to \textit{automorphisms}
of the measure space\textit{ $(X,\cB,P)$. }

If $T$ is non-singular, then for every $n$, $P\circ T^{n}$ is absolutely
continuous with respect to $P$. By the Radon-Nikodym theorem there
exist measurable functions $\frac{dP\circ T^{n}}{dP}\in L_{1}(X,P)_{+}$
such that for every $A\in\mathcal{B}$ , \[
P\circ T^{n}(A)=\int_{A}\left(\frac{dP\circ T^{n}}{dP}\right)\, dP.\]
For convenience, we will use the notation $\left(T^{n}\right)'(x):=\frac{dP\circ T^{n}}{dP}(x)$
. 

\textbf{Notation} : Let $\left[f\doteq a\pm\epsilon\right]$ stand
for $\left\{ x:\ \left|f(x)-a\right|\leq\epsilon\right\} $. Similarly
\\
$x\doteq e^{\pm a}$ will stand for $e^{-a}<x<e^{a}$.

For every set $A\in\cB$, $\overline{A}$ will denote the compliment
of $A$.

Unless otherwise stated all segments $[a,b]$ will stand for $[a,b]\cap\bbZ$. 

If $\mu$ is a measure on $(X,\cB)$, denote by $\cB_{+}^{\mu}$ the
collection of sets with positive $\mu-$measure. 
\begin{defn*}
\textbf{The Ratio set} $R(T,\mu)$: Let $\left(X,\mathcal{B},\mu,T\right)$
be a non-singular dynamical system. Let $a\geq0$. We shall say that
$a\in R(T,\mu)$ if \[
\forall A\in\cB_{+}^{\mu},\forall\epsilon>0,\exists n\in\bbN,\ A\cap T^{-n}A\cap\left[\left(T^{n}\right)'\doteq a\pm\epsilon\right]\in\cB_{+}^{\mu}.\]

\end{defn*}
The set $R(T,\mu)\cap\mathbb{R}_{+}$ is always a closed multiplicative
subgroup of the positive real numbers. It follows from Maharam's theorem
\cite{Mah} that if $T$ is conservative then $1\in R(T,\mu)$. 

One can show that a system is of type ${\rm III}$ if and only if
$0\in R(T,\mu)$. 

Hence for a type ${\rm III}$ conservative system $R(T,\mu)$ is either
$\{0,1\}$ , \\
$\left\{ \lambda^{n}:n\in\mathbb{Z}\right\} \cup\{0\}$ , for some
$0<\lambda<1$ , or $[0,\infty)$ . These systems are denoted by ${\rm III}_{0}$,
${\rm III}_{\lambda}$ and ${\rm III}_{1}$ respectively.

\medskip{}

\subsection{The shift on the product space:}

Let $\bbX=\{0,1\}^{\bbZ}$ and $T$ be the left shift action on $\bbX$,
that is \[
\left(Tw\right)_{i}=w_{i+1}.\]

Denote the cylinder sets by \[
\left[b\right]_{k}^{l}=\left\{ w\in\bbX:\ \forall i=k,...,l,\ w_{i}=b_{i}\right\} .\]
Let $\cF(k,l)$ denote the smallest $\sigma-$algebra which contains
the cylinders $\left\{ \left[b\right]_{k}^{l}:\ b\in{\displaystyle \prod_{j=l}^{k}}\{0,1\}\right\} $.
For convenience write $\cF(n)$ for $\cF(-n,n)$. 

A measure $P={\displaystyle \prod_{k=-\infty}^{\infty}}P_{k}\in\mathcal{P}(\bbX)$
is called a product measure if for every $k<l$, and for every cylinder
$[b]_{k}^{l},$

\[
P\left(\left[b\right]_{k}^{l}\right)=\prod_{j=k}^{l}P_{j}\left(\left\{ b_{j}\right\} \right).\]

The following claim is a direct consequence of Kakutani's Theorem
on equivalence of product measures\cite{Kak}.
\begin{claim*}
Let $P={\displaystyle \prod_{k=-\infty}^{\infty}P_{k}}$ be a product
measure for which there exists $0<p<1$ such that for every $k\in\bbZ$,
$p<P_{k}\left(\left\{ 1\right\} \right)<1-p.$ Then the shift is non-singular
if and only if\[
\sum_{k=-\infty}^{\infty}\left(P_{k}\left(\left\{ 0\right\} \right)-P_{k-1}\left(\left\{ 0\right\} \right)\right)^{2}<\infty.\]
In addition for every $w\in\bbX$, \[
\left(T^{n}\right)'(w)=\prod_{k=-\infty}^{\infty}\frac{P_{k-n}\left(w_{k}\right)}{P_{k}\left(w_{k}\right)}.\]

\end{claim*}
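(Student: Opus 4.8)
The plan is to recognise $P\circ T^{n}$ as a product measure and then invoke Kakutani's theorem directly. A computation on cylinders shows that $P\circ T^{n}$ is again a product measure: since $T$ is a bijection, $T^{n}\left(\left[b\right]_{k}^{l}\right)=\left[b'\right]_{k-n}^{l-n}$ with $b'_{j}=b_{j+n}$, so that
\[
P\circ T^{n}\left(\left[b\right]_{k}^{l}\right)=\prod_{j=k-n}^{l-n}P_{j}\left(\left\{ b_{j+n}\right\} \right)=\prod_{m=k}^{l}P_{m-n}\left(\left\{ b_{m}\right\} \right),
\]
that is, $P\circ T^{n}=\prod_{m}P_{m-n}$. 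Because $0<p<P_{m}\left(\left\{ 1\right\} \right)<1-p$, each marginal $P_{m}$ has full support on $\{0,1\}$, hence $P_{m-n}\sim P_{m}$ for all $m,n$, and Kakutani's theorem applies to the pair $\left(P,\,P\circ T^{n}\right)$.

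Next I would reduce non-singularity to the equivalence $P\circ T\sim P$. Product measures with equivalent marginals obey Kakutani's dichotomy (equivalent or mutually singular), so the hypothesis $P\circ T\ll P$ — which is exactly non-singularity of $T$ — already forces $P\circ T\sim P$, since a nonzero measure cannot be simultaneously absolutely continuous with respect to and singular to another; the converse is trivial. Thus the shift is non-singular iff $P\circ T\sim P$, and as $T$ is invertible this propagates to $P\circ T^{n}\sim P$ for every $n\in\bbZ$. By Kakutani's theorem, $P\circ T\sim P$ holds iff $\prod_{k}\rho\left(P_{k},P_{k-1}\right)>0$, equivalently $\sum_{k}\bigl(1-\rho\left(P_{k},P_{k-1}\right)\bigr)<\infty$, where $\rho(\mu,\nu)=\sum_{x\in\{0,1\}}\sqrt{\mu(\{x\})\,\nu(\{x\})}$ denotes the Hellinger affinity.

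It then remains to compare $1-\rho\left(P_{k},P_{k-1}\right)$ with $\left(P_{k}(\{0\})-P_{k-1}(\{0\})\right)^{2}$. Writing $a_{k}:=P_{k}(\{0\})\in(p,1-p)$ and using $1-\rho(\mu,\nu)=\tfrac{1}{2}\sum_{x}\bigl(\sqrt{\mu(\{x\})}-\sqrt{\nu(\{x\})}\bigr)^{2}$ together with $\bigl(\sqrt{a}-\sqrt{b}\bigr)^{2}=(a-b)^{2}/\bigl(\sqrt{a}+\sqrt{b}\bigr)^{2}$, one gets
\[
1-\rho\left(P_{k},P_{k-1}\right)=\frac{\left(a_{k}-a_{k-1}\right)^{2}}{2}\left(\frac{1}{\bigl(\sqrt{a_{k}}+\sqrt{a_{k-1}}\bigr)^{2}}+\frac{1}{\bigl(\sqrt{1-a_{k}}+\sqrt{1-a_{k-1}}\bigr)^{2}}\right).
\]
Since $a_{k},a_{k-1}\in(p,1-p)$, both denominators lie in $[4p,4(1-p)]$, so $1-\rho\left(P_{k},P_{k-1}\right)$ lies between $\tfrac{1}{4(1-p)}\left(a_{k}-a_{k-1}\right)^{2}$ and $\tfrac{1}{4p}\left(a_{k}-a_{k-1}\right)^{2}$; summing over $k$ yields the stated criterion.

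For the Radon--Nikodym derivative, the remaining part of Kakutani's theorem presents it as an infinite product of one-dimensional densities: the finite products $\prod_{k=-N}^{N}\frac{P_{k-n}(\{w_{k}\})}{P_{k}(\{w_{k}\})}$ are exactly the densities of $P\circ T^{n}$ restricted to $\cF(N)$ with respect to $P$ restricted to $\cF(N)$, they form a uniformly integrable martingale, and their $P$-a.e.\ and $L^{1}$ limit is $\frac{dP\circ T^{n}}{dP}=\left(T^{n}\right)'$, which is precisely $\prod_{k=-\infty}^{\infty}\frac{P_{k-n}(\{w_{k}\})}{P_{k}(\{w_{k}\})}$. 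The only step that is more than bookkeeping is the two-sided, $k$-uniform comparison of $1-\rho\left(P_{k},P_{k-1}\right)$ with $\left(a_{k}-a_{k-1}\right)^{2}$, and this is exactly where the hypothesis $p<P_{k}(\{1\})<1-p$ enters; the structural part really is, as advertised, an immediate application of Kakutani's theorem.
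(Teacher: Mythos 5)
Your proof is correct and is exactly the route the paper intends: the paper states the claim as a direct consequence of Kakutani's theorem, and you supply the standard details (recognizing $P\circ T^{n}$ as the shifted product measure, using the dichotomy to upgrade $P\circ T\ll P$ to $P\circ T\sim P$, comparing the Hellinger affinity with $\left(P_{k}(\{0\})-P_{k-1}(\{0\})\right)^{2}$ via the uniform bounds from $p$, and obtaining the derivative as the a.e.\ and $L^{1}$ limit of the finite-dimensional density martingale). No gaps worth noting.
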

\bigskip{}

\section{A Type ${\rm III}_{1}$ shift\label{Sec:A-Type-III 1}}

In this section we construct the product measure. 

The product measure will be $P={\displaystyle \prod_{k=-\infty}^{\infty}}P_{k}$
, where \begin{equation}
\forall i\geq0,\ P_{i}\left(0\right)=P_{i}\left(1\right)=\frac{1}{2}.\label{P on N+}\end{equation}

The definition of $P_{k}-$ for negative $k$'s is more complicated
as it involves an inductive procedure.

\subsection{The inductive definition of $P_{k}$ for $k<0$:}

We will need to define inductively 5 sequences $\left\{ \lambda_{t}\right\} _{t=1}^{\infty},\left\{ n_{t}\right\} _{t=1}^{\infty}$
, $\left\{ m_{t}\right\} _{t=1}^{\infty},\ \left\{ M_{t}\right\} _{t=0}^{\infty}$
and $\left\{ N_{t}\right\} _{t=1}^{\infty}$ . The sequence $\left\{ \lambda_{t}\right\} $
is of real numbers which decreases to $1$. The other four, $\left\{ n_{t}\right\} _{t=1}^{\infty}$
, $\left\{ m_{t}\right\} _{t=1}^{\infty},\ \left\{ M_{t}\right\} _{t=0}^{\infty}$
and $\left\{ N_{t}\right\} _{t=1}^{\infty}$ are increasing sequences
of integers. \medskip{}

First choose a positive summable sequence $\left\{ \epsilon_{t}\right\} _{t=1}^{\infty}$
and set $M_{0}=1$. 

\underbar{Base of the induction}: Set $\lambda_{1}=e$ , $n_{1}=2$
, $m_{1}=4$ . Set also $N_{1}=M_{0}+n_{1}=3$ and $M_{1}=N_{1}+m_{1}=7.$
$ $

Given $\left\{ \lambda_{u},n_{u},N_{u},m_{u},M_{u}\right\} _{u=1}^{t-1}$
, we will choose the next level $\left\{ \lambda_{t},n_{t},N_{t},m_{t},M_{t}\right\} $
in the following order. First we choose $\lambda_{t}$ depending on
$M_{t-1}$ and $\epsilon_{t}$. Given $\lambda_{t}$ we will choose
$n_{t}$ and then $N_{t}$ will be defined by \[
N_{t}:=M_{t-1}+n_{t}.\]
Then given $N_{t}$ we will choose $m_{t}$ and finally set \[
M_{t}:=N_{t}+m_{t}.\]
$ $\underbar{Choice of $\lambda_{t}$}: Set $k_{t}:=\left\lfloor \log_{2}\left(\frac{M_{t-1}}{\epsilon_{t}}\right)\right\rfloor +1$
, where $\left\lfloor x\right\rfloor $ denotes the integral part
of $x$. Then set $\lambda_{t}=e^{\frac{1}{2^{k_{t}}}}$. With this
choice of $\lambda_{t}$ we have,\begin{equation}
\lambda_{t}^{M_{t-1}}<e^{\epsilon_{t}}.\label{Constraint on lambda(t)}\end{equation}
 This choice of $\lambda_{t}$ has the property that for every $u<t$,
$\lambda_{u}=\lambda_{t}^{2^{k_{t}-k_{u}}}.$

Define \[
A_{t-1}:=\left\{ \prod_{u=1}^{t-1}\lambda_{u}^{x_{u}}:\ x_{u}\in\left[-n_{u},n_{u}\right]\right\} .\]

The set $A_{t-1}$ has the following properties:
\begin{enumerate}
\item $a\in A_{t-1}$ if and only if $\frac{1}{a}\in A_{t-1}$. 
\item By choosing $x_{t}=0$, one can see that for every $t\in\mathbb{N}$,
$A_{t-1}\subset A_{t}$. 
\item Since for every $u<t$, $\lambda_{t}=\lambda_{u}^{2^{k_{t}-k_{u}}}$,
then $A_{t-1}$ is a subset of \\
$\lambda_{t}^{\bbZ}=\left\{ \lambda_{t}^{l}:\ l\in\mathbb{Z}\right\} $
. In addition $A_{t-1}$ is a finite set. 
\end{enumerate}
\underbar{Choice of $n_{t}$}: Given $\{\lambda_{u},n_{u},m_{u}\}_{u=1}^{t-1}$
and $\lambda_{t}$, the set $A_{t-1}$ is a finite subset of $\lambda_{t}^{\bbZ}$.
Choose $n_{t}$ large enough so it satisfies the following property: 

\textbf{Property {*}}: $n_{t}$ is large enough so that for every
$a,b\in A_{t-1}$, there exists $p=p(a,b)\in\left(\frac{-n_{t}}{4},\frac{n_{t}}{4}\right)$
for which $\lambda_{t}^{p}=b\cdot a$. 

This is possible since $a,b\in A_{t-1}\subset\lambda_{t}^{\bbZ}$. 

Notice that this property is equivalent to \[
\lambda_{t}^{n_{t}/4}\geq\max\left\{ a^{2}:a\in A_{t-1}\right\} .\]
\underbar{Choice of $m_{t}$}: Now that $n_{t}$ is chosen we set
$N_{t}=M_{t-1}+n_{t}$ . Set \begin{equation}
N_{t}\left(2+2^{3N_{t}}\right)=m_{t}.\label{constraint of m(t)}\end{equation}

\subsection{Definition of the product measure}

Let $P={\displaystyle \prod_{k=-\infty}^{\infty}}P_{k}$ , where for
$k\geq0$, \[
P_{k}\left(\left\{ 0\right\} \right)=P_{k}\left(\left\{ 1\right\} \right)=\frac{1}{2}.\]
And for $k<0$ , \begin{equation}
P_{k}(\{0\})=1-P_{k}(\{1\})=\begin{cases}
\frac{1}{1+\lambda_{t}} & if\ -N_{t}<k\leq M_{t-1}\\
\frac{1}{2} & {\rm for\ all}\ -M_{t}<k\leq-N_{t}\end{cases}\label{P on N-}\end{equation}
Define a function $t(\cdot):\mathbb{N}\to\bbN$ by $t(n)=\min\left\{ t\in\bbN:n<N_{t}\right\} $
.

\subsubsection{Statement of the main theorem: }
\begin{thm}
The shift $T:\left(\bbX,\cB(\bbX),P\right)\circlearrowleft$ is non-singular
and of type $III_{1}$. \label{thm: main theorem}\end{thm}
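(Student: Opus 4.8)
The plan is to verify, in order, that $T$ is non-singular, that it is conservative, and that $R(T,P)=[0,\infty)$; the last two will both follow from a single ``return lemma''.

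\emph{Non-singularity.} By the Claim (Kakutani's criterion) it suffices to check that $\sum_{k}\bigl(P_{k}(\{0\})-P_{k-1}(\{0\})\bigr)^{2}<\infty$. The function $k\mapsto P_{k}(\{0\})$ is piecewise constant with values $\tfrac12$ and $\tfrac{1}{1+\lambda_{t}}$, and in the part of the axis belonging to level $t$ it changes value only a bounded number of times, each jump having size $\bigl|\tfrac{1}{1+\lambda_{t}}-\tfrac12\bigr|\leq\lambda_{t}-1$. Since $\lambda_{t}>1$ and $M_{t-1}\geq1$, inequality (\ref{Constraint on lambda(t)}) gives $\lambda_{t}-1\leq\lambda_{t}^{M_{t-1}}-1<e^{\epsilon_{t}}-1\leq 2\epsilon_{t}$ (eventually), so $\sum_{k}(\cdots)^{2}\leq C\sum_{t}\epsilon_{t}^{2}<\infty$. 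Hence $T$ is non-singular and the Claim supplies $(T^{n})'(w)=\prod_{k}\tfrac{P_{k-n}(w_{k})}{P_{k}(w_{k})}$; for each fixed $n$ this is a finite product (only coordinates $k$ for which $k-n$ and $k$ have different ``type'' contribute a factor $\neq1$) whose factors lie in a fixed compact subset of $(0,\infty)$, so $(T^{n})'$ and $(T^{-n})'$ are bounded above and away from $0$ for each $n$.

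\emph{Reduction to a return lemma.} Recall that $R(T,P)\cap\mathbb{R}_{+}$ is a closed multiplicative subgroup of $(0,\infty)$, that $T$ is conservative as soon as every $A\in\cB_{+}^{P}$ satisfies $P(A\cap T^{-n}A)>0$ for some $n\geq1$, and that the system is of type ${\rm III}$ iff $0\in R(T,P)$. Put $c_{t}:=n_{t}\log\lambda_{t}=n_{t}/2^{k_{t}}$; the reformulation of Property $*$, namely $\lambda_{t}^{n_{t}/4}\geq\max\{a^{2}:a\in A_{t-1}\}=\prod_{u<t}\lambda_{u}^{2n_{u}}$, forces $\tfrac14 c_{t}\geq2\sum_{u<t}c_{u}$, hence $c_{t}\to\infty$ and $\lambda_{t}^{n_{t}}=e^{c_{t}}\to\infty$; and $k_{t}\to\infty$ (since $M_{t-1}\to\infty$), so $\lambda_{t}\downarrow1$. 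Therefore the countable set $D:=\bigl\{\lambda_{t}^{j}:t\geq1,\ |j|\leq n_{t}/4\bigr\}$ is dense in $(0,\infty)$ and accumulates at $0$. It thus suffices to prove the \textbf{Return Lemma}: for every $A\in\cB_{+}^{P}$, every $\epsilon>0$, every sufficiently large $t$, and every integer $j$ with $|j|\leq n_{t}/4$, there exists $n\in\bbN$ with
\[
A\cap T^{-n}A\cap\bigl[(T^{n})'\doteq\lambda_{t}^{j}\pm\epsilon\bigr]\in\cB_{+}^{P}.
\]
Granting this, $R(T,P)\supseteq D$, and being closed it then contains the closure of $D$, which is all of $[0,\infty)$ (the accumulation of $D$ at $0$ yields $0\in R(T,P)$, i.e.\ type ${\rm III}$); taking $j=0$ shows $P(A\cap T^{-n}A)>0$ for every $A\in\cB_{+}^{P}$, so $T$ is conservative. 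Hence $T$ is of type ${\rm III}_{1}$.

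\emph{Proof of the Return Lemma (sketch).} Fix $A$ and $\epsilon$, and pick $\delta>0$ small. By the density theorem choose $K$ and a cylinder $C\in\cF(K)$ with $P(A\cap C)>(1-\delta)P(C)$. Take $t$ so large that $N_{t-1}>2K$ and $\epsilon_{t},\lambda_{t}-1$ are negligible, and search for $n$ in the range $[N_{t-1},N_{t})$, i.e.\ with $t(n)=t$, so that $n>2K$; then the coordinate windows of $C$ and of $T^{-n}C$ are disjoint, $P(C\cap T^{-n}C)=P(C)\,P(T^{-n}C)>0$, and, using the boundedness of $(T^{\pm n})'$,
\[
P\bigl(A\cap T^{-n}A\cap[(T^{n})'\doteq\lambda_{t}^{j}\pm\epsilon]\bigr)\ \geq\ P\bigl(C\cap T^{-n}C\cap[(T^{n})'\doteq\lambda_{t}^{j}\pm\epsilon]\bigr)-(1+B_{n})\,\delta\,P(C).
\]
It remains to choose $n\in[N_{t-1},N_{t})$ so that $(T^{n})'\doteq\lambda_{t}^{j}\pm\epsilon$ off a small relative part of $C\cap T^{-n}C$; then fixing $\delta$ small finishes the estimate. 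For this one analyses $(T^{n})'(w)=\prod_{k}\tfrac{P_{k-n}(w_{k})}{P_{k}(w_{k})}$ block by block: a factor differs from $1$ only when exactly one of $k,k-n$ lies in a $\lambda$-block, or the two lie in $\lambda$-blocks of distinct levels. For $n$ in this range the level-$t$ $\lambda_{t}$-block (of length $n_{t}$) is carried into the non-negative coordinates and the $1/2$-blocks $(-M_{u},-N_{u}]$ of the earlier levels, so the only $w$-dependent contribution is that of the $n_{t}$ $\lambda_{t}$-biased coordinates, while every remaining ``boundary'' factor is a product of $\lambda_{u}$, $u<t$, hence an element of $A_{t-1}\subseteq\lambda_{t}^{\bbZ}$; Property $*$ is precisely what lets one slide $n$ within $[N_{t-1},N_{t})$ so that this accumulated power of $\lambda_{t}$ equals the prescribed $j$. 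The residual $w$-dependence is handled by a Chebyshev estimate: the contribution $\bigl(\tfrac{1+\lambda_{t}}{2}\bigr)^{n_{t}}\lambda_{t}^{S-n_{t}}$ of the biased coordinates (with $S$ the number of $0$'s among them) concentrates within $e^{\pm O(\epsilon_{t})}$ of its mean on a set of measure $\geq1-\epsilon_{t}$, inequality (\ref{Constraint on lambda(t)}) keeps every incidental ``carry'' within $e^{\pm\epsilon_{t}}$ of $1$, and the size of $m_{t}$ imposed by (\ref{constraint of m(t)}) makes the receiving $1/2$-blocks long enough to absorb these coordinates with no net effect and to leave room inside $C\cap T^{-n}C$ for the alignment. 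Taking $t$ large pushes the total error below $\epsilon$.

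\emph{Where the difficulty lies.} The crux is this last step. Because $(T^{n})'$ is a product over all of $\bbZ$, one must pick the single integer $n$ so that simultaneously (i) $T^{-n}C$ is independent of $C$; (ii) the boundary factors multiply out \emph{exactly} to $\lambda_{t}^{j}$, which is where the arithmetic $\lambda_{u}=\lambda_{t}^{2^{k_{t}-k_{u}}}$, the nesting $A_{t-1}\subseteq A_{t}\subseteq\lambda_{t}^{\bbZ}$, and Property $*$ are used; and (iii) the remaining fluctuation of $(T^{n})'$ over the free coordinates is forced below $\epsilon$, which is exactly what dictates the sizes demanded by (\ref{Constraint on lambda(t)}), (\ref{constraint of m(t)}) and Property $*$. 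Making (iii) rigorous — the concentration of $(T^{n})'$ around $\lambda_{t}^{j}$ on a subset of almost full relative measure in $C\cap T^{-n}C$ — is the real content of the argument.
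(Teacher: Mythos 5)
Your non-singularity argument is essentially the paper's own (Kakutani plus \eqref{Constraint on lambda(t)}), apart from one slip: for fixed $n$ the product $\prod_{k}\frac{P_{k-n}(w_{k})}{P_{k}(w_{k})}$ is \emph{not} a finite product --- for every $u>t(n)$ there are $n$ coordinates at the left end of each level-$u$ block where $P_{k-n}\neq P_{k}$ --- so boundedness of $(T^{n})'$ has to come from the tail estimate $\prod_{u>t(n)}\lambda_{u}^{\pm n}\doteq e^{\pm\sum_{u>t(n)}\epsilon_{u}}$ (this is Lemma \ref{lem:first derivative lemma}), not from finiteness. The real problem is that everything past that point rests on your ``Return Lemma'', which you only sketch, and the sketch's central mechanism is wrong. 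For $n\in[N_{t-1},M_{t-1})$ one has $(T^{n})'(w)\approx f_{t-1}(T^{n}w)/f_{t-1}(w)$ (Lemma \ref{lem:second derivative lemma}); the numerator depends on the coordinates of $w$ in the shifted windows, so this ratio is a genuinely random variable spread over the large finite set $A_{t-1}$, \emph{not} a ``boundary factor'' determined by $n$ that Property * lets you tune to $\lambda_{t}^{j}$ by sliding $n$. Consequently your step (iii) is false as stated: $(T^{n})'$ does not concentrate near any single prescribed value on a set of almost full relative measure in $C\cap T^{-n}C$; the best one can do for a single time is force the value on an event of probability of order $2^{-N_{t}}$ (that is exactly the content of Lemma \ref{Lem: inequlity which helps with the derivative}, via a forcing cylinder $D$ built from Property *). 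Your error term $-(1+B_{n})\,\delta\,P(C)$ then swamps the main term: $\delta$ is fixed when $C$ is chosen, while $t$ (hence the $2^{-N_{t}}$ smallness of the good event and the bound $B_{n}$, which can be as large as $\max A_{t-1}$, and worse for $|j|$ near $n_{t}/4$) is chosen only afterwards, so the quantifiers are circular and cannot be untangled within a single-$n$ scheme. There is also a smaller quantifier gap in the reduction: your Return Lemma only covers ``sufficiently large $t$'' depending on $A$, so it does not literally give $R(T,P)\supseteq D$ for the fixed countable set $D$; one must instead, for each target $r$ and each $A$, pick $t$ large and $j$ with $\lambda_{t}^{j}$ close to $r$.

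What the paper does, and what your sketch is missing, is a mechanism to beat the $2^{-N_{t}}$ probability: it conditions on cylinders $C\in\cC_{\fT}$ on which $f_{\fT}$ is constant, and instead of one return time it uses the $\sim 2^{3N_{\fT}}$ times $lN_{\fT}$, $l=2,\dots,m_{\fT}/N_{\fT}$, all landing inside the long $\frac12$-block; the events $T^{-lN_{\fT}}B\cap D(C,t_{0},l)$ are independent, each of probability $\geq 2^{-2N_{\fT}}$, so their union covers $99\%$ of $B\cap C$ (this is why $m_{t}$ is taken as in \eqref{constraint of m(t)}), yielding the EVC-type Lemma \ref{lem: EVC property for the shift} with absolute constants, after which a fixed $90\%$-approximation of $A$ by a cylinder suffices. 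Moreover the paper only targets the single value $\lambda_{t_{0}}$ with $\lambda_{t_{0}}<1.4$, so the induced map $S=T^{\phi}$ in the exhaustion argument has derivative between $0.9$ and $1.5$, which is what makes the bookkeeping with $B\setminus A$ work; the full ratio set $[0,\infty)$ (including $0$) is then obtained for free from $\lambda_{t}\downarrow 1$ and the fact that $R(T,P)$ is a closed multiplicative subgroup --- there is no need to hit every $\lambda_{t}^{j}$, $|j|\leq n_{t}/4$, directly, and trying to do so is precisely what breaks the derivative bounds in your scheme. As it stands, your proposal reduces the theorem to a statement whose proof is conceded to be ``the real content of the argument'' and whose sketched proof would fail, so the gap is essential.
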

\begin{rem*}
By Kakutani's theorem using the fact that for every\\
 $k\in\bbZ,$$\frac{1}{1+e}<P_{k}\left(\{1\}\right)<\frac{e}{1+e}$
, the non-singularity of the shift under $P$ is equivalent to\[
\sum_{k=-\infty}^{\infty}\left(P_{k}\left(\left\{ 0\right\} \right)-P_{k-1}\left(\left\{ 0\right\} \right)\right)^{2}<\infty.\]
Furthermore it follows from the structure of $P$ that\[
P_{k}\left(\left\{ 0\right\} \right)-P_{k-1}\left(\left\{ 0\right\} \right)=\begin{cases}
\frac{1}{2}-\frac{1}{1+\lambda_{t}} & {\rm if}\ k=-M_{t-1}+1\\
\frac{1}{1+\lambda_{t}}-\frac{1}{2} & {\rm if}\ k=-N_{t}+1\\
0 & {\rm otherwise}\end{cases}.\]
Therefore,\[
\sum_{k=-\infty}^{\infty}\left(P_{k}\left(\left\{ 0\right\} \right)-P_{k-1}\left(\left\{ 0\right\} \right)\right)^{2}=2\sum_{t=1}^{\infty}\left(\frac{\lambda_{t}-1}{2(1+\lambda_{t})}\right)^{2}.\]
This sum converges or diverges together with $\sum_{t=1}^{\infty}\left(\log\lambda_{t}\right)^{2}$
which by (\ref{Constraint on lambda(t)}) is less then $\sum\frac{\epsilon_{t}}{M_{t-1}}<\infty$.
Therefore the shift is non-singular with respect to $P$ and we can
calculate the Radon-Nikodym derivatives of $T$.
\end{rem*}

\subsection{Radon-Nikodym derivatives of the shift: }
\begin{lem}
If $P$ is a product measure which is built by the inductive construction
then\label{lem:first derivative lemma}\[
\lim_{n\to\infty}\left[\left(T^{n}\right)'(w)\left/\left(\prod_{k=-N_{t(n)}+1}^{n-1}\frac{P_{k-n}\left(w_{k}\right)}{P_{k}\left(w_{k}\right)}\right)\right.\right]=1\ {\rm uniformly\ in}\ w\in\bbX.\]
\end{lem}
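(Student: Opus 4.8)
The plan is to start from the exact product formula for the Radon--Nikodym derivative provided by Kakutani's claim, namely
\[
\left(T^{n}\right)'(w)=\prod_{k=-\infty}^{\infty}\frac{P_{k-n}\left(w_{k}\right)}{P_{k}\left(w_{k}\right)},
\]
and show that the ratio of this infinite product to the finite partial product $\prod_{k=-N_{t(n)}+1}^{n-1}\frac{P_{k-n}(w_{k})}{P_{k}(w_{k})}$ tends to $1$ uniformly in $w$. I would split the ``missing'' factors into three ranges: the tail $k\geq n$, the tail $k\leq -N_{t(n)}$, and the single index $k=n-1$ or similar boundary terms (these last I would absorb by a cosmetic adjustment of the product limits, since each such factor lies in a fixed compact subset of $(0,\infty)$ and there are boundedly many of them). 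For $k\geq n$ the key observation from \eqref{P on N+} is that both $P_{k-n}$ and $P_k$ behave like $\tfrac12$ once $k$ is large; more precisely, for $k\geq n$ we have $P_k(w_k)=\tfrac12$, so the factor is $2P_{k-n}(w_k)$, which differs from $1$ only when $k-n<0$, i.e.\ for no $k$ in this range at all once $k \ge n$ --- so in fact this tail contributes exactly $1$. Thus the only genuine work is the left tail $k\leq -N_{t(n)}$.

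For the left tail, the idea is that the factors $\frac{P_{k-n}(w_k)}{P_k(w_k)}$ for $k\leq -N_{t(n)}$ are each close to $1$ and there are not too many of them that deviate. By the structure of $P$ in \eqref{P on N-}, both $P_{k-n}(\{0\})$ and $P_k(\{0\})$ are of the form $\frac{1}{1+\lambda_s}$ or $\frac12$; the factor deviates from $1$ only when $k$ and $k-n$ fall in ``blocks'' governed by different parameters $\lambda_s$. I would bound $\left|\log\frac{P_{k-n}(w_k)}{P_k(w_k)}\right|\leq \left|\log(1+\lambda_{s})-\log(1+\lambda_{s'})\right|\leq \log\lambda_{s}+\log\lambda_{s'}$ for the relevant indices $s,s'$, and then sum over $k\leq -N_{t(n)}$. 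The decisive point is that once $n<N_{t(n)}$ (by definition of $t(n)$), every index $k\leq -N_{t(n)}$ and its shift $k-n$ lie sufficiently far to the left that the only $\lambda$'s that can appear are $\lambda_s$ with $s\geq t(n)$; using the telescoping structure of the blocks and the constraint \eqref{Constraint on lambda(t)}, $\lambda_s^{M_{s-1}}<e^{\epsilon_s}$, which controls the total logarithmic contribution of block $s$ by $\epsilon_s$, the whole left-tail sum is bounded by $C\sum_{s\geq t(n)}\epsilon_s$, which goes to $0$ as $n\to\infty$ since $t(n)\to\infty$ and $\{\epsilon_s\}$ is summable. Exponentiating gives the uniform convergence to $1$.

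The main obstacle I anticipate is the bookkeeping in the left tail: one must carefully track, for each $k\leq -N_{t(n)}$, which block index $s=s(k)$ governs $P_k$ and which index governs $P_{k-n}$, show these indices are both $\geq t(n)$, and verify that summing $\log\lambda_s$ over an entire block of length $O(M_{s})$ still only produces $O(\epsilon_s)$ --- this is exactly what the somewhat mysterious choice $\lambda_t=e^{1/2^{k_t}}$ with $k_t=\lfloor\log_2(M_{t-1}/\epsilon_t)\rfloor+1$ is engineered to guarantee via \eqref{Constraint on lambda(t)}, but one has to check the lengths of the intervals on which $P_k$ takes a value $\neq\tfrac12$ are compatible. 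A secondary, purely technical point is handling the finitely many boundary indices where the partial product's endpoints ($-N_{t(n)}+1$ and $n-1$) do not exactly match the natural cutoffs; since $t(n)$ changes only at the discrete times $n=N_t$, and each omitted/extra factor is bounded between $e^{-\log\lambda_1}$ and $e^{\log\lambda_1}$, this contributes a bounded number of bounded factors that I would show also collapse to $1$, or else fold into the statement by noting the claimed limit is unchanged under removing boundedly many factors each tending to $1$.
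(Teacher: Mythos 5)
Your proposal is correct and follows essentially the same route as the paper: split Kakutani's infinite product at $k\geq n$ (which contributes exactly $1$) and $k\leq -N_{t(n)}$, note that in the left tail a factor differs from $1$ only for the at most $n$ indices near each block boundary and only involves parameters $\lambda_{s}$ with $s>t(n)$, and then control each block's contribution by $\lambda_{s}^{n}\leq\lambda_{s}^{M_{s-1}}<e^{\epsilon_{s}}$ via \eqref{Constraint on lambda(t)} and sum the $\epsilon_{s}$. Your logarithmic bookkeeping of individual mismatched factors is just a mild variant of the paper's explicit block-by-block product computation (and your worry about leftover boundary factors is moot, since the three ranges partition $\bbZ$ exactly).
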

\begin{proof}
By Kakutani's theorem, for every $w\in\bbX$, \[
\left(T^{n}\right)'(w)=\prod_{k=-\infty}^{\infty}\frac{P_{k-n}\left(w_{k}\right)}{P_{k}\left(w_{k}\right)}=\left(\prod_{k=-\infty}^{-N_{t(n)}}\times\prod_{k=-N_{t(n)}+1}^{n-1}\times\prod_{k=n}^{\infty}\right)\frac{P_{k-n}\left(w_{k}\right)}{P_{k}\left(w_{k}\right)}.\]
 For every $k>n$ and for every $w_{k}\in\{0,1\}$, $P_{k-n}\left(w_{k}\right)=P_{k}\left(w_{k}\right)=\frac{1}{2}$.
Therefore \[
\prod_{k=n}^{\infty}\frac{P_{k-n}\left(w_{k}\right)}{P_{k}\left(w_{k}\right)}=1.\]
Also \begin{equation}
\prod_{k=-\infty}^{-N_{t(n)}}\frac{P_{k-n}\left(w_{k}\right)}{P_{k}\left(w_{k}\right)}=\prod_{u=t(n)+1}^{\infty}\left(\prod_{k=-N_{u}+1}^{-M_{u-1}}\frac{P_{k-n}\left(w_{k}\right)}{P_{k}\left(w_{k}\right)}\times\prod_{k=-M_{u}+1}^{-N_{u}}\frac{P_{k-n}\left(w_{k}\right)}{P_{k}\left(w_{k}\right)}\right).\label{eq: Contribution from the negative tail}\end{equation}
 For every $u>t(n)$, the length of the segment $\left(-N_{u},-M_{u-1}\right]$
is $n_{u}$ and the length of the segment $\left(-M_{u},-N_{u}\right]$
is $m_{u}$. Both $n_{u}$ and $m_{u}$ are greater than $N_{t(n)}$
, hence greater than $n$. 

Therefore since for every $k\in\left(-N_{u}+n,-M_{u-1}\right]$, $P_{k-n}=P_{k}=\left(\frac{1}{1+\lambda_{u}},\frac{\lambda_{u}}{1+\lambda_{u}}\right)$
then \[
\prod_{k=-N_{u}+1}^{-M_{u-1}}\frac{P_{k-n}\left(w_{k}\right)}{P_{k}\left(w_{k}\right)}=\prod_{k=-N_{u}+1}^{-N_{u}+n}\frac{P_{k-n}\left(w_{k}\right)}{P_{k}\left(w_{k}\right)}.\]
Since $n<m_{u}$ then for every $k\in\left(-N_{u},N_{u+n}\right]$,
\[
-M_{u}=-N_{u}-m_{u}<k-n\leq-N_{u}.\]
So $P_{k-n}=\left(\frac{1}{2},\frac{1}{2}\right)$ and $P_{k}=\left(\frac{1}{1+\lambda_{u}},\frac{\lambda_{u}}{1+\lambda_{u}}\right)$.
In conclusion, for every $u>t(n)$, \begin{equation}
\prod_{k=-N_{u}+1}^{-M_{u-1}}\frac{P_{k-n}\left(w_{k}\right)}{P_{k}\left(w_{k}\right)}=\prod_{k=-N_{u}+1}^{-N_{u}+n}\frac{P_{k-n}\left(w_{k}\right)}{P_{k}\left(w_{k}\right)}\label{eq: the contribution from -N_u -M_(u-1)}\end{equation}
\[
=\prod_{k=-N_{u}+1}^{-N_{u}+n}\frac{1/2}{\lambda_{u}^{w_{k}}/(1+\lambda_{u})}=\frac{\lambda_{u}^{-\sum_{k=-N_{u}+1}^{N_{u}+n}w_{k}}}{\left(2\left(1+\lambda_{u}\right)\right)^{n}}.\]
Similarly for every $u>t(n)$,\begin{equation}
\prod_{k=-M_{u}+1}^{-N_{u}}\frac{P_{k-n}\left(w_{k}\right)}{P_{k}\left(w_{k}\right)}=\lambda_{u}^{\sum_{k=-M_{u}+1}^{-M_{u}+n}w_{k}}\left(2\left(1+\lambda_{u}\right)\right)^{n}.\label{eq:Contribution from -M_u -N_u}\end{equation}
Putting together \eqref{eq: Contribution from the negative tail},
\eqref{eq: the contribution from -N_u -M_(u-1)} and \eqref{eq:Contribution from -M_u -N_u}
we have\[
\prod_{k=-\infty}^{-N_{t(n)}}\frac{P_{k-n}\left(w_{k}\right)}{P_{k}\left(w_{k}\right)}=\]
\[
=\prod_{u=t(n)+1}^{\infty}\lambda_{u}^{\sum_{k=-M_{u-1+1}}^{-M_{u-1}+n}w_{k}-\sum_{k=-N_{u}+1}^{-N_{u}+n}w_{k}}\doteq\prod_{u=t(n)+1}^{\infty}\lambda_{u}^{\pm n}.\]
Since for every $u>t(n).\ n<M_{u-1}$ , it follows that \[
\lambda_{u}^{\pm n}\doteq\lambda_{u}^{\pm M_{u-1}}\doteq e^{\pm\epsilon_{u}}.\]
So $ $ $\prod_{k=-\infty}^{-N_{t(n)}}\frac{P_{k-n}\left(w_{k}\right)}{P_{k}\left(w_{k}\right)}\doteq e^{\pm\sum_{u=t(n)+1}^{\infty}\epsilon_{u}}\to1$
as $n\to\infty$, since $\sum\epsilon_{t}<\infty$. 

The conclusion follows. \end{proof}
\begin{defn*}
For every $u\in\bbN$ , write\[
\Upsilon_{u}(w)=\sum_{k=-N_{u}+1}^{-M_{u-1}}w_{k}=\#\left\{ k\in\left(-N_{u},-M_{u-1}\right]:\ w_{k}=1\right\} .\]

\end{defn*}
Set also \[
f_{t}(w)=\prod_{u=1}^{t}\lambda_{u}^{\Upsilon_{u}(w)}.\]
$f_{t}(\cdot)$ is $\cF\left(-N_{t},0\right)$ measurable. 

The next lemma uses the previous lemma to show that if $n$ belongs
to an interval of the form $\left[N_{t-1},M_{t-1}\right)$ then we
can obtain a reasonable approximation of $\left(T^{n}\right)'$ .
This approximation becomes more accurate as $t\to\infty$. Notice
that for $n\in\left[N_{t-1},M_{t-1}\right)$, $t=t(n)$.
\begin{lem}
For every $N_{t-1}\leq n<M_{t-1}$ and for every $w\in\bbX$,\label{lem:second derivative lemma}
\[
\left(T^{n}\right)'(w)=K_{n}(w)\cdot\frac{f_{t(n)-1}\circ T^{n}\left(w\right)}{f_{t(n)-1}(w)},\]
Where $\lim_{n\to\infty}K_{n}(w)=1$ uniformly in $w\in\bbX$. \end{lem}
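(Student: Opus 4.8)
The plan is to start from Lemma~\ref{lem:first derivative lemma} and carefully rewrite the middle product $\prod_{k=-N_{t(n)}+1}^{n-1}\frac{P_{k-n}(w_k)}{P_k(w_k)}$, which differs from $(T^n)'(w)$ only by a factor tending uniformly to $1$, as a ratio of the form $\frac{f_{t(n)-1}\circ T^n(w)}{f_{t(n)-1}(w)}$ times a correction term. First I would fix $n$ with $N_{t-1}\le n<M_{t-1}$, so that $t(n)=t$, and split the index range $(-N_t,\,n-1]$ according to the block structure of $P$: inside each block of the form $(-N_u,-M_{u-1}]$ (where $P_k$ takes the value $\frac{1}{1+\lambda_u}$) and each block $(-M_u,-N_u]$ (where $P_k=\frac12$), for $u=1,\dots,t-1$, together with the top block near $0$ and the block $(-N_t,-M_{t-1}]$. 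On the blocks where $P_k=\frac12$, and on indices $k$ with $k-n$ also landing where $P_{k-n}=\frac12$ (which happens for all but boundedly many $k$ once $n$ is small compared to the block lengths $n_u,m_u,M_{t-1}$), the ratio $\frac{P_{k-n}(w_k)}{P_k(w_k)}$ is exactly $1$ and contributes nothing.

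The second step is to identify the surviving contributions. For each $u<t$, shifting by $n$ moves the block $(-N_u,-M_{u-1}]$, on which $P_k=\frac{1}{1+\lambda_u}$-weighted, partly onto positions where $P_{k-n}=\frac12$; the net effect, after the same telescoping computation as in \eqref{eq: the contribution from -N_u -M_(u-1)} and \eqref{eq:Contribution from -M_u -N_u}, is a factor $\lambda_u^{\Upsilon_u(T^n w)-\Upsilon_u(w)}$ up to an error coming from the boundary indices where $k-n$ straddles a block boundary — and there are at most $O(n)$ such indices, each contributing a bounded multiplicative factor $\lambda_u^{\pm 1}\doteq e^{\pm\epsilon_u}$ (using $n<M_{u-1}$ and \eqref{Constraint on lambda(t)}). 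Summing these boundary errors over $u=1,\dots,t-1$ gives a factor $\doteq e^{\pm\sum_{u\le t-1}\epsilon_u}$, which is bounded but does \emph{not} go to $1$; this is the point that needs care. Multiplying the clean part $\prod_{u=1}^{t-1}\lambda_u^{\Upsilon_u(T^n w)-\Upsilon_u(w)}=\frac{f_{t-1}\circ T^n(w)}{f_{t-1}(w)}$ by all the error factors (the one from Lemma~\ref{lem:first derivative lemma}, the block-boundary ones, the top-block-near-$0$ one, and the contribution of the current block $(-N_t,-M_{t-1}]$) and calling the product $K_n(w)$ gives the stated identity.

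The main obstacle is showing $K_n(w)\to1$ uniformly, since the naive bound on the accumulated errors is merely $e^{\pm\sum_{u=1}^\infty\epsilon_u}$, a constant. The resolution is that for a \emph{fixed} $u$, the number of boundary indices $k$ for which $\frac{P_{k-n}(w_k)}{P_k(w_k)}\ne\lambda_u^{\text{(clean exponent)}}$ is governed by how $n$ compares to the \emph{gaps} $M_{u-1}$ between consecutive perturbation blocks, and in fact for $u$ much smaller than $t$ one has $n\ge N_{t-1}\gg M_{u-1}$, so the block $(-N_u,-M_{u-1}]$ and its shift by $n$ are \emph{disjoint} and both lie in regions where, after shifting, $P_{k-n}=P_k$; hence those $u$ contribute \emph{exactly} $1$, not an error. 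Only $u$ within a bounded distance below $t$ (those with $M_{u-1}$ comparable to $n$, i.e. $u$ near $t(n)$) produce genuine errors, and for such $u$ the factor is $e^{\pm\epsilon_u}$ with $u\to\infty$ as $n\to\infty$; so the total error is $\doteq e^{\pm\sum_{u\ge u_0(n)}\epsilon_u}$ with $u_0(n)\to\infty$, whence $K_n(w)\to1$ uniformly by the tail of the convergent series $\sum\epsilon_u$. I would make this precise by first determining, for each $u<t$, the exact range of $n$ for which the block $(-N_u,-M_{u-1}]-n$ still overlaps a $\frac12$-region versus being swallowed into another perturbation region, using the explicit relations $N_u=M_{u-1}+n_u$, $M_u=N_u+m_u$ and \eqref{constraint of m(t)}, and then collecting the bounded-but-vanishing errors.
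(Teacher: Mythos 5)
Your overall strategy (reduce to the middle product via Lemma \ref{lem:first derivative lemma} and exploit the block structure of $P$) is the paper's, but the key step of your error analysis is wrong, and the correct argument needs none of the delicate bookkeeping you describe. The claim you lean on in the third paragraph --- that for $u$ much smaller than $t$ the block $\left(-N_{u},-M_{u-1}\right]$ and its shift by $n$ both lie where $P_{k-n}=P_{k}$, so such $u$ ``contribute exactly $1$'' --- is false. For $k\in\left(-N_{u},-M_{u-1}\right]$ with $u<t$ and $N_{t-1}\leq n<M_{t-1}$ one has $k-n\in\left(-N_{t},-N_{t-1}\right)$, so $P_{k}$ is the $\lambda_{u}$-measure while $P_{k-n}$ is either $\left(\frac{1}{2},\frac{1}{2}\right)$ or the $\lambda_{t}$-measure; likewise on the forward shift $\left(n-N_{u},n-M_{u-1}\right]$ one has $P_{k}=\frac{1}{2}\neq P_{k-n}$. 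The cancellation that produces $\lambda_{u}^{\Upsilon_{u}\circ T^{n}(w)-\Upsilon_{u}(w)}$ is therefore not local in $k$: the denominator factor $\lambda_{u}^{-\Upsilon_{u}(w)}$ at the block and the numerator factor $\lambda_{u}^{\Upsilon_{u}\circ T^{n}(w)}$ at its forward shift pair up globally. Your second-paragraph bookkeeping is also not salvageable as stated: the indices where the naive per-block identity fails number on the order of $n$ (or $n_{u}$), not $O(1)$, and since $n\geq N_{t-1}>M_{u-1}$ for $u<t$ (your parenthetical ``$n<M_{u-1}$'' holds only for $u=t$), a factor $\lambda_{u}^{\pm O(n)}$ is not controlled by \eqref{Constraint on lambda(t)}, so the claimed bound $e^{\pm\epsilon_{u}}$ per $u$, and hence your tail estimate $e^{\pm\sum_{u\geq u_{0}(n)}\epsilon_{u}}$, is unjustified.

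What actually happens (and what the paper does) is that the cancellation for all $u\leq t(n)$ is \emph{exact}: compute $\prod_{k=-N_{t}+1}^{n-1}P_{k}(w_{k})$ and $\prod_{k=-N_{t}+1}^{n-1}P_{k-n}(w_{k})=\prod_{j=-N_{t}-n+1}^{-1}P_{j}(w_{j+n})$ separately. Since $n<M_{t-1}<m_{t}$, the shifted range $\left(-N_{t}-n,-1\right]$ stays inside $\left(-M_{t},-1\right]$ and its extra length-$n$ piece lies in the half-half block $\left(-M_{t},-N_{t}\right]$, so by \eqref{P on N-} both products equal the same power of $2$ times $\prod_{u=1}^{t}\left(1+\lambda_{u}\right)^{-n_{u}}$ times $\lambda$-powers, and the ratio is exactly $\prod_{u=1}^{t}\lambda_{u}^{\Upsilon_{u}\circ T^{n}(w)-\Upsilon_{u}(w)}$, with no boundary errors for $u\leq t-1$ at all. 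The only approximate factor is the single top term $\lambda_{t(n)}^{\Upsilon_{t(n)}\circ T^{n}(w)-\Upsilon_{t(n)}(w)}$, bounded using $\left|\Upsilon_{t(n)}\circ T^{n}(w)-\Upsilon_{t(n)}(w)\right|\leq n<M_{t(n)-1}$ and \eqref{Constraint on lambda(t)} by $e^{\pm\epsilon_{t(n)}}\to1$, which together with the tail factor from Lemma \ref{lem:first derivative lemma} gives the uniform convergence $K_{n}\to1$. Until the false ``$P_{k-n}=P_{k}$ for small $u$'' claim is replaced by this exact two-sided computation (or an equivalent correct pairing argument), the proposal has a genuine gap.
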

\begin{proof}
By lemma \ref{lem:first derivative lemma} it is enough to show that
for $N_{t-1}\leq n<m_{t-1}$, \[
\prod_{k=-N_{t(n)}+1}^{n-1}\frac{P_{k-n}\left(w_{k}\right)}{P_{k}\left(w_{k}\right)}=h_{n}(w)\cdot\prod_{u=1}^{t(n)-1}\lambda_{u}^{\Upsilon_{u}\circ T^{n}\left(w\right)-\Upsilon_{u}(w)}.\]
Where $\lim_{n\to\infty}h_{n}(w)=1$ uniformly in $w\in\bbX$ . 

First notice that because of the product structure \[
\prod_{k=-N_{t(n)}+1}^{n-1}P_{k}\left(w_{k}\right)=P\left(\left[w\right]_{-N_{t(n)}+1}^{n-1}\right).\]
And \[
P\left(\left[w\right]_{-N_{t(n)}}^{n-1}\right)=P\left(\left[w\right]_{0}^{n-1}\right)\cdot\prod_{u=1}^{t(n)}\left(P\left(\left[w\right]_{-M_{u}+1}^{-N_{u}}\right)\cdot P\left(\left[w\right]_{-N_{u}+1}^{-M_{u-1}}\right)\right).\]
By \eqref{P on N+} , $ $$P\left(\left[w\right]_{0}^{n-1}\right)=\frac{1}{2^{n}}$.
By \eqref{P on N-} , \[
P\left(\left[w\right]_{-M_{u}+1}^{-N_{u}}\right)\cdot P\left(\left[w\right]_{-N_{u}+1}^{-M_{u-1}}\right)=\frac{1}{2^{m_{u}}}\cdot\left(\frac{1}{1+\lambda_{u}}\right)^{n_{u}}\lambda_{u}^{\sum_{u=-N_{u}+1}^{-M_{u-1}}w_{k}}=\]
\[
=\frac{1}{2^{m_{u}}}\cdot\left(\frac{1}{1+\lambda_{u}}\right)^{n_{u}}\lambda_{u}^{\Upsilon_{u}(w)}.\]
Multiplying all the terms\begin{equation}
\prod_{k=-N_{t(n)}+1}^{n-1}P_{k}\left(w_{k}\right)=2^{-n+\sum_{u=1}^{t(n)}m_{u}}\cdot\prod_{u=1}^{t(n)}\left(\frac{1}{1+\lambda_{u}}\right)^{n_{u}}\lambda_{u}^{\Upsilon_{u}(w)}.\label{estimate on prod P_k}\end{equation}
Similarly \[
\prod_{k=-N_{t(n)}+1}^{n-1}P_{k-n}\left(w_{k}\right)=\prod_{k=-N_{t(n)}-n+1}^{0}P_{k}\left(w_{k+n}\right)\]
\[
=\prod_{k=-N_{t(n)}-n}^{-N_{t(n)}}P_{k}\left(w_{k+n}\right)\cdot\prod_{u=1}^{t(n)}\left[\prod_{k=-M_{u}+1}^{-N_{u}}\times\prod_{k=-N_{u}+1}^{-M_{u-1}}P_{k}\left(w_{k+n}\right)\right].\]
By the definition of $P_{k}$ for negative $k$,\[
\left\{ \begin{array}{lll}
\prod_{k=-N_{t(n)}-n}^{-N_{t(n)}}P_{k}\left(w_{k+n}\right) & = & \prod_{k=-N_{t(n)}-n}^{-N_{t(n)}}\frac{1}{2}=\frac{1}{2^{n}}\\
\prod_{k=-M_{u}+1}^{-N_{u}}P_{k}\left(w_{k+n}\right) & = & \frac{1}{2^{m_{u}}}\\
\prod_{k=-N_{u}+1}^{-M_{u-1}}P_{k}\left(w_{k+n}\right) & = & \prod_{k=-N_{u}+1}^{-M_{u-1}}\frac{\lambda_{u}^{w_{k+n}}}{1+\lambda_{u}}=\frac{\lambda_{u}^{\sum_{k=-N_{u}+1}^{-M_{u-1}}w_{k+n}}}{(1+\lambda_{u})^{n_{u}}}\end{array},\right.\]
therefore\[
\prod_{k=-N_{t}+1}^{n-1}P_{k-n}\left(w_{k}\right)=2^{-n+\sum_{u=1}^{t(n)}m_{u}}\cdot\prod_{u=1}^{t(n)}\left(\frac{1}{1+\lambda_{u}}\right)^{n_{u}}\lambda_{u}^{\sum_{k=-N_{u}+1}^{-M_{u-1}}w_{k+n}}\]
\begin{equation}
=2^{-n+\sum_{u=1}^{t(n)}m_{u}}\cdot\prod_{u=1}^{t(n)}\left(\frac{1}{1+\lambda_{u}}\right)^{n_{u}}\lambda_{u}^{\Upsilon_{u}\circ T^{n}\left(w\right)}.\label{estimate on prod P_(k-n)}\end{equation}
 Combining the estimates \eqref{estimate on prod P_k} and \eqref{estimate on prod P_(k-n)}
, we get , \[
\prod_{k=-N_{t(n)}+1}^{n-1}\frac{P_{k-n}\left(w_{k}\right)}{P_{k}\left(w_{k}\right)}=\prod_{u=1}^{t(n)}\lambda_{u}^{\Upsilon_{u}\circ T^{n}(w)-\Upsilon_{u}(w)}=\lambda_{t(n)}^{\Upsilon_{t(n)}\circ T^{n}(w)-\Upsilon_{t(n)}(w)}\cdot\frac{f_{t(n)}\circ T^{n}\left(w\right)}{f_{t(n)}(w)}.\]
The conclusion will follow once we show that for every

$\begin{gathered}N_{t-1}\leq n<M_{t-1}\ ,\ h_{n}(w):=\lambda_{t(n)}^{\Upsilon_{t(n)}\circ T^{n}(w)-\Upsilon_{t(n)}(w)}\to1\end{gathered}
$ uniformly in $w\in\bbX$ as $n\to\infty$. 

Notice that, \[
\Upsilon_{t(n)}(T^{n}w)-\Upsilon_{t(n)}(w)=\sum_{u=-N_{t(n)}-1}^{-M_{t(n)-1}}\left(w_{k+n}-w_{k}\right)=\]
 \[
\sum_{k=-M_{t(n)-1}+n}^{-M_{t(n)-1}+1}w_{k}-\sum_{k=-N_{t(n)}-1}^{-N_{t(n)}+n}w_{k}\doteq\pm n\doteq\pm M_{t(n)-1}.\]
Therefore\[
\lambda_{t(n)}^{\Upsilon_{u}\circ T^{n}(w)-\Upsilon_{u}(w)}\doteq\lambda_{t(n)}^{\pm M_{t(n)-1}}\doteq e^{\pm\epsilon_{t(n)}}\to1.\]
 The conclusion follows.
\end{proof}
The following two remarks will be useful in the proof of type ${\rm III}_{1}$. 
\begin{rem*}
Since ${\rm Image}\left(\Upsilon_{u}\circ T^{n}-\Upsilon_{u}\right)=\left[-n_{u},n_{u}\right]$,
the set $A_{t}$ is the set of all values of \[
\frac{f_{t}\circ T^{n}(w)}{f_{t}(w)}=\prod_{u=1}^{t}\lambda_{u}^{\Upsilon_{u}\circ T^{n}(w)-\Upsilon_{u}(w)}.\]

\end{rem*}
Since for every $t_{0}<t$, $\lambda_{t_{0}}\in A_{t-1}$ and also
$\frac{1}{f_{t-1}(w)}\in A_{t}$ we can formulate (property {*}) in
the following way. 
\begin{rem}
\textbf{\label{rem:A-reformulation-of property *}}It follows from
property ({*}) that $n_{t}$ is large enough so that for every $w\in\bbX$,
$t_{0}<t$ and $k\in\bbN$ there exists $p=p(w)\in\left(-\frac{n_{t}}{4},\frac{n_{t}}{4}\right)$
so that \[
\lambda_{t}^{p}=\frac{\lambda_{t_{0}}}{f_{t-1}(w)}.\]

\end{rem}

\subsection{The proof of type ${\rm III}_{1}$}

Since $\lambda_{t}\downarrow1$ , and $\mathcal{R}(T)$ is always
a closed multiplicative subgroup of $\mathbb{R}_{+}$, the following
lemma will yield that $\left(\bbX,\cB,P,T\right)$ is of type $III_{1}$. 
\begin{lem}
For every $t\in\bbN$, $\lambda_{t}$ belongs to the ratio set of
$T$.\label{lem:lambda(t) are essential values}
\end{lem}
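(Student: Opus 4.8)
The goal is to show that for every fixed $t$, $\lambda_t \in R(T,P)$; i.e., given an arbitrary $A \in \cB^P_+$ and $\epsilon > 0$, we must produce some $n$ with $A \cap T^{-n}A \cap \left[(T^n)' \doteq \lambda_t \pm \epsilon\right] \in \cB^P_+$. Since cylinder sets generate $\cB$, by a standard approximation argument it suffices to handle $A = [b]_{-N}^{N}$ a cylinder of some rank (or, more precisely, it suffices to show the defining condition holds for such $A$ after absorbing an $\epsilon$-density loss, using that a positive-measure set has density close to $1$ in some cylinder). So the plan is: \emph{fix a cylinder $A$ depending only on coordinates in $[-N,N]$, and for $s$ large (with $N_{s-1} > N$) choose $n$ in the window $[N_{s-1}, M_{s-1})$ of Lemma~\ref{lem:second derivative lemma}, engineered so that on a positive-measure sub-event the Radon--Nikodym cocycle is forced to be $\doteq \lambda_t$.}

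The mechanism is Lemma~\ref{lem:second derivative lemma}: for $n \in [N_{s-1}, M_{s-1})$ we have $(T^n)'(w) = K_n(w)\cdot f_{s-1}(T^n w)/f_{s-1}(w)$ with $K_n \to 1$ uniformly, and by the first Remark after that lemma the ratio $f_{s-1}(T^n w)/f_{s-1}(w) = \prod_{u=1}^{s-1}\lambda_u^{\Upsilon_u(T^n w) - \Upsilon_u(w)}$ takes values in the finite set $A_{s-1}$. I want to choose the window coordinate $n$ and restrict $w$ so that this product equals exactly $\lambda_t$ (possible since $\lambda_t \in A_{s-1}$ for $s-1 > t$), while \emph{simultaneously} ensuring $w \in A$ and $T^n w \in A$. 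The key structural point is that $\Upsilon_u(T^n w) - \Upsilon_u(w)$ for $u < s$ depends only on coordinates of $w$ lying in two blocks of length $n$ near $-N_u$ and near $-M_{u-1}$ — disjoint from the block $[-N, N]$ that defines $A$ once $n$ is in the prescribed window and $s$ is large — so I can \emph{independently} prescribe the coordinates of $w$ inside $[-N,N]$ (to match $b$, giving $w \in A$), the coordinates near $-N_{s-1}$... wait, rather the coordinates in the shifted blocks that govern the $\Upsilon_u$ increments, and still have positive measure. Concretely: pick $n = N_{s-1}$ (say), note $T^n$ translates the coordinate block $[-N+n, N+n]$ onto $[-N,N]$; I want $w_j = b_j$ for $j \in [-N,N]$ and $w_j = b_{j-n}$ for $j \in [-N+n, N+n]$, which pins down $T^n w$ on $[-N,N]$ too; then on the remaining (disjoint, for $s$ large) coordinate blocks that determine the $\Upsilon_u$-increments, I prescribe $w$ so that $\sum_u (\log \lambda_u)(\Upsilon_u(T^nw) - \Upsilon_u(w)) = \log \lambda_t$. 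Using $\lambda_u = \lambda_{s-1}^{2^{k_{s-1}-k_u}}$ (the common-base property) this is a single integer equation $\sum_u 2^{k_{s-1}-k_u} x_u = 2^{k_{s-1}-k_t}$ with $x_u \in [-n_u, n_u]$, solvable since $\lambda_t \in A_{s-1}$; each $x_u$ is realized by choosing how many $1$'s appear in a length-$n$ sub-block of $w$ near $-M_{u-1}$ versus near $-N_u$, which is a constraint on finitely many coordinates and hence cuts out a positive-measure set. Then $(T^n)'(w) = K_n(w)\lambda_t \doteq \lambda_t \pm \epsilon$ once $n$ (hence $s$) is large enough that $|K_n - 1|$ is small, and $A \cap T^{-n}A \cap [(T^n)' \doteq \lambda_t \pm \epsilon]$ contains this positive-measure cylinder.

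There is a subtlety I should flag and handle: I need the block $[-N,N] \cup [-N+n, N+n]$ (coordinates fixed to make $w, T^nw \in A$) to be disjoint from the coordinate blocks near $-N_u$ and $-M_{u-1}$ (for $u \le s-1$) that I am using to tune the $\Upsilon_u$-increments — and I should double check the increment of $\Upsilon_u$ under $T^n$ really does decouple into "a length-$n$ piece near $-M_{u-1}$ minus a length-$n$ piece near $-N_u$" as computed in the proof of Lemma~\ref{lem:second derivative lemma} (the telescoping $\sum_{k=-M_{u-1}+1}^{-M_{u-1}+n} w_k - \sum_{k=-N_u+1}^{-N_u+n} w_k$). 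For $u = s-1$ the relevant blocks sit around $-M_{s-2}$ and $-N_{s-1}$, while $n \approx N_{s-1} = M_{s-2} + n_{s-1}$, so I must make sure $N$ is small compared to $m_{u}, n_u$ for all $u < s$ — which holds automatically since these grow and $N$ is fixed, provided $s$ is large; the choice of $n_t$ via Property~(*) is exactly what gives room (within $(-n_t/4, n_t/4)$) for the $x_u$'s. \textbf{I expect the main obstacle} to be the bookkeeping that makes all these coordinate prescriptions genuinely non-overlapping and jointly realizable with positive $P$-measure, together with the reduction from a general positive-measure set $A$ to a cylinder (the standard density argument: choose a cylinder $[b]_{-N}^N$ in which $A$ has $P$-density $> 1 - \delta$, then the translate $T^{-n}[b]_{-N}^N$ also has $A$-density close to $1$ along the same arithmetic, so the triple intersection stays positive). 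Once those are set up, the rest is the routine estimate $(T^n)' \doteq \lambda_t e^{\pm o(1)}$ from Lemmas~\ref{lem:first derivative lemma} and~\ref{lem:second derivative lemma}.
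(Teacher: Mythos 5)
There is a genuine gap, and it is exactly at the point you flag and then wave away: the reduction from an arbitrary $A\in\cB_{+}$ to a cylinder. Your per-time mechanism (use Lemma \ref{lem:second derivative lemma} on a window $[N_{s-1},M_{s-1})$, write the cocycle as $K_n\cdot f_{s-1}\circ T^{n}/f_{s-1}$, and use Property (*) to prescribe finitely many coordinates forcing the $f$-ratio to equal $\lambda_{t_0}$ while pinning $w$ and $T^{n}w$ on $[-N,N]$) is sound and is essentially the paper's Lemma \ref{Lem: inequlity which helps with the derivative}. But the event you produce this way is a single cylinder whose measure is exponentially small (of order $2^{-N_{s}}$ times $P(B)^{2}$-type factors), while the "density loss" in passing from $A$ to the cylinder $B=[b]_{-N}^{N}$ is of order $\delta P(B)$ for a fixed $\delta$. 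So the bad set $B\setminus A$ can entirely swallow your good event, and your parenthetical claim that "the translate $T^{-n}B$ also has $A$-density close to $1$" is simply unjustified: $A$ is an arbitrary positive-measure set with high density in $B$, and nothing controls its density in $T^{-n}B$. Producing one $n$ with a positive-measure subset of $B\cap T^{-n}B\cap[(T^{n})'\doteq\lambda_{t_0}\pm\epsilon]$ does not yield positivity of $A\cap T^{-n}A\cap[(T^{n})'\doteq\lambda_{t_0}\pm\epsilon]$, which is what the ratio set requires.

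This is precisely why the paper's proof is structured around the quantitative EVC statement (Lemma \ref{lem: EVC property for the shift}): one repeats your single-time construction at the many times $lN_{\fT}$, $l=2,\dots,\frac{m_{\fT}}{N_{\fT}}$, uses independence of the blocks (here the choice $m_{t}=N_{t}(2+2^{3N_{t}})$ is what makes $(1-2^{-2N_{t}})^{2^{3N_{t}}}$ small) to show the \emph{union} of the good events covers $90\%$ of $B$, and only then transfers to $A$ via a Hopf-type argument: define $\phi$ and the map $S(w)=T^{\phi(w)}w$ on the good set, and use the two-sided bound $(S)'\doteq\lambda_{t_0}\pm\epsilon$ to show that $S(A\cap\mathcal{D})$ cannot sit mostly inside $B\setminus A$, whence some single $l$ gives $P(A\cap T^{-lN_{\fT}}A\cap[(T^{lN_{\fT}})'\doteq\lambda_{t_0}\pm\epsilon])>0$. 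Your proposal is missing both the covering-by-many-times step and this induced-map argument; without them the approach as written does not close. (A secondary point: when you tune the increments $\Upsilon_{u}\circ T^{n}-\Upsilon_{u}$ you should, as the paper does, condition on a cylinder $[c]_{-N_{\fT}}^{0}$ in the class $\cC_{\fT}$ so that $\Upsilon_{\fT}(c)\in[\frac{n_{\fT}}{4},\frac{3n_{\fT}}{4}]$ guarantees the needed exponent $p\in(-\frac{n_{\fT}}{4},\frac{n_{\fT}}{4})$ is realizable on the shifted block; solving a multi-block integer equation with some $x_{u}$ already pinned by $w\in A$ needs this kind of room.)
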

The proof of lemma \ref{lem:lambda(t) are essential values} will
be a series of lemmas. We will show that $\lambda_{t}$ satisfies
an $EVC$ property as in \cite{A-L--1} , where the generating partitions
of $\mathbb{X}$, will be cylinders $\{\left[b\right]_{-n}^{n}:b\in\{-1,1\}^{n}\}$
and the neighborhoods of $\lambda_{t}$ will be $U_{\epsilon}=\left[\lambda_{t}-\epsilon,\lambda_{t}+\epsilon\right]$. 
\begin{lem}
For every $t_{0}\in\bbN$ the following holds: \label{lem: EVC property for the shift}

\begin{onehalfspace}
For every cylinder set $B=[b]_{-n}^{n}$ and for every $\epsilon>0$,
there exists a $\tau=\fT(B,t_{0},\epsilon)\in\bbN$, for which\begin{equation}
P\left(\cup_{l=1}^{\frac{m_{\fT}}{N_{\fT}}}\left\{ B\cap T^{-lN_{\fT}}B\cap\left[\left(T^{lN_{\fT}}\right)^{'}\doteq\lambda\pm\epsilon\right]\right\} \right)\geq0.9\cdot P(B).\label{EVC}\end{equation}
\end{onehalfspace}

\end{lem}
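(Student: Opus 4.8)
The plan is to fix $t_0$, a cylinder $B=[b]_{-n}^{n}$, and $\epsilon>0$, and then choose $\tau$ very large (in particular $\tau > t(n)$ so that $B$ is $\cF(-N_\tau,0)$--measurable and $n < M_{\tau-1}$) so that the error terms coming from Lemma~\ref{lem:second derivative lemma} are uniformly within $e^{\pm\epsilon'}$ of $1$ on all of $\bbX$, for some small $\epsilon'$ to be fixed in terms of $\epsilon$. I would then use the approximation from Lemma~\ref{lem:second derivative lemma}: for $n' := lN_\tau$ in the range $[N_{\tau-1},M_{\tau-1})$ we have
\[
\left(T^{n'}\right)'(w) = K_{n'}(w)\cdot \frac{f_{\tau-1}\circ T^{n'}(w)}{f_{\tau-1}(w)},
\]
with $K_{n'}$ uniformly close to $1$. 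The key idea is that for $w\in B$, we want to arrange that $T^{lN_\tau}w$ also lies in $B$ and that the ratio $f_{\tau-1}\circ T^{lN_\tau}(w)/f_{\tau-1}(w)$, which always lies in the finite set $A_{\tau-1}\subset\lambda_\tau^{\bbZ}$ by the first remark, gets pushed to equal $\lambda_{t_0}$ up to the small multiplicative error. By Remark~\ref{rem:A-reformulation-of property *}, for each $w$ there is an integer $p(w)\in(-n_\tau/4,n_\tau/4)$ with $\lambda_\tau^{p(w)} = \lambda_{t_0}/f_{\tau-1}(w)$; the point is that a suitable choice of $l$ and of the ``free'' coordinates in the block $(-N_\tau,-M_{\tau-1}]$ realizes exactly the shift in $\Upsilon_\tau$ needed to multiply by $\lambda_{t_0}$.

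Concretely, I would partition the sum over $l=1,\dots,m_\tau/N_\tau$ according to where the shift $T^{lN_\tau}$ maps the ``support'' block $[-n,n]$ of $B$: when $lN_\tau$ lands the image of $[-n,n]$ safely inside one of the ``$\lambda_\tau$''-blocks $(-N_\tau,-M_{\tau-1}]$ shifted appropriately, or inside the central $\tfrac12$-block, the coordinates that $B$ constrains and the coordinates that determine $\Upsilon_\tau\circ T^{lN_\tau}-\Upsilon_\tau$ are disjoint, so by the product structure of $P$ we can freely prescribe the latter. For such $l$, on the event $B\cap T^{-lN_\tau}B$ the quantity $\Upsilon_\tau\circ T^{lN_\tau}-\Upsilon_\tau$ ranges over all of $[-n_\tau,n_\tau]$ with controlled conditional probabilities (each value in a window of size $O(n_\tau)$ has probability bounded below uniformly, since the relevant coordinates are i.i.d.\ $\mathrm{Bernoulli}(\lambda_\tau/(1+\lambda_\tau))$ or $\mathrm{Bernoulli}(1/2)$). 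Restricting further to the sub-event where this difference equals the prescribed $p(w)$ gives, via Lemma~\ref{lem:second derivative lemma}, $(T^{lN_\tau})'\doteq\lambda_{t_0}\pm\epsilon$ exactly on a set of the correct form, and one estimates its $P$-measure from below. Summing over the admissible $l$ (there are of order $m_\tau/N_\tau$ of them, and by the constraint $N_\tau(2+2^{3N_\tau})=m_\tau$ there are plenty) and using near-disjointness of the corresponding events for different $l$ — because different $l$ move the $B$-block into disjoint coordinate ranges — I would push the total measure above $0.9\,P(B)$.

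The main obstacle I anticipate is the bookkeeping that makes the union over $l$ essentially a disjoint union (or at least obeys an inclusion–exclusion / second-moment bound) so that the measures genuinely add up to at least $0.9\,P(B)$ rather than being absorbed by overlaps; one must check that for the relevant range of $l$ the return events $B\cap T^{-lN_\tau}B\cap[\,\cdot\,]$ are conditionally almost independent given $B$, exploiting that $lN_\tau$ ranges over a long arithmetic progression with step $N_\tau$ so the shifted copies of the block $[-n,n]$ sit in disjoint coordinate windows. A secondary technical point is verifying that for a positive proportion of $l\in\{1,\dots,m_\tau/N_\tau\}$ the shift $T^{lN_\tau}$ does land the block in a ``good'' position (not straddling a boundary between a $\lambda_u$-block and a $\tfrac12$-block, and with the coordinates controlling $\Upsilon_\tau$ disjoint from the $B$-coordinates), which is where the specific sizes $n_\tau$, $m_\tau$, $N_\tau$ enter; the generous choice $m_\tau = N_\tau(2+2^{3N_\tau})$ should give room to spare. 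Once this is set up, choosing $\tau$ large enough that $e^{\pm\epsilon_\tau}$ and $K_{n'}$ contribute less than $\epsilon$ of total multiplicative error, and that the conditional-probability lower bounds (which depend only on $\lambda_\tau\le\lambda_{t_0+1}$ and on $n$) times the number of good $l$ exceeds $0.9$, completes the proof.
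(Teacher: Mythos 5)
Your outline follows the paper's route in spirit (reduce to the ratio $f_{\fT}\circ T^{lN_{\fT}}/f_{\fT}$ via Lemma \ref{lem:second derivative lemma}, realize the exponent $p$ from Remark \ref{rem:A-reformulation-of property *} on a coordinate block attached to each $l$, and exploit that there are $m_{\fT}/N_{\fT}=2+2^{3N_{\fT}}$ such blocks), but the step that actually produces the bound $0.9\,P(B)$ is missing, and the quantitative claim you base it on is false. The per-$l$ success probability is not ``bounded below uniformly'' in terms of $n$ and $\lambda$ alone: for a fixed target value, $\Upsilon_{\fT}\circ T^{lN_{\fT}}$ is a Binomial sum over $n_{\fT}$ fresh coordinates, so hitting a prescribed value costs at least a factor of order $n_{\fT}^{-1/2}$ (the paper only uses the cruder bound $2^{-N_{\fT}}$, obtained from an explicit cylinder $D(C,t_{0},l)$ of length $N_{\fT}$); combined with $P\left(T^{-lN_{\fT}}B\right)\approx2^{-2n}$ this decays as $\fT\to\infty$. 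Consequently ``(number of good $l$) times (lower bound) exceeds $0.9$'' is not a valid lower bound for the union: all the events in question are subsets of $B$, so their measures cannot simply be added, and ``near-disjointness'' is not the operative mechanism. What makes the proof work is exact independence across $l$: the sets $T^{-lN_{\fT}}B\cap D(C,t_{0},l)$ are measurable on disjoint coordinate windows near $lN_{\fT}$, hence independent of each other and of $B\cap C$, so the part of $B\cap C$ missed by every $l$ has measure at most $\left(1-2^{-2N_{\fT}}\right)^{2^{3N_{\fT}}}P(B\cap C)<0.01\,P(B\cap C)$; this is precisely why $m_{\fT}$ was defined as $N_{\fT}\left(2+2^{3N_{\fT}}\right)$, supplying exponentially many independent trials against an exponentially small per-trial probability. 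You mention a second-moment or conditional-independence fallback, but you never commit to it, and as stated your counting step does not close the argument.

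The second gap is that your target exponent $p(w)$ depends on $f_{\fT-1}(w)$, i.e.\ on the whole negative block $\left(-N_{\fT},0\right]$, not just on the $2n+1$ coordinates fixed by $B$; conditioning on $B$ alone therefore does not turn ``$\Upsilon_{\fT}\circ T^{lN_{\fT}}-\Upsilon_{\fT}$ equals $p(w)$'' into an event on the coordinates near $lN_{\fT}$, and the independence structure you need breaks. The paper's fix is to decompose $B$ over the full cylinders $C=[c]_{-N_{\fT}}^{0}$, and moreover only over $C\in\cC_{\fT}$, those with $\Upsilon_{\fT}(c)\in\left[\frac{n_{\fT}}{4},\frac{3n_{\fT}}{4}\right]$: this makes the target constant on each piece and, crucially, guarantees that the required $p\in\left(-\frac{n_{\fT}}{4},\frac{n_{\fT}}{4}\right)$ is realizable as a value of $\Upsilon_{\fT}\circ T^{lN_{\fT}}-\Upsilon_{\fT}(c)$ (a constraint your sketch does not address, since for a given $w$ the achievable range is only $\left[-\Upsilon_{\fT}(w),\,n_{\fT}-\Upsilon_{\fT}(w)\right]$), while the strong law of large numbers, via \eqref{C(t) are dense cylinders by SLLN}, shows these $C$ carry all but $0.05\,P(B)$ of the mass. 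Finally, a smaller index slip: for $l\geq1$ one has $lN_{\fT}\geq N_{\fT}>M_{\fT-1}$, so your stated range $\left[N_{\fT-1},M_{\fT-1}\right)$ for $lN_{\fT}$ is empty; the relevant window is $\left[N_{\fT},M_{\fT}\right)$, where Lemma \ref{lem:second derivative lemma} approximates $\left(T^{lN_{\fT}}\right)'$ by $f_{\fT}\circ T^{lN_{\fT}}/f_{\fT}$.
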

\begin{onehalfspace}
Throughout the  proofs and the lemmas the letter $n$ will denote
the radius of the cylinder. 

Since the proof is delicate and involves many details we give a sketch
of the idea of the proof.

\textbf{Sketch of the proof of Lemma \ref{lem: EVC property for the shift}}:
Let $B=[b]_{-n}^{n}$ , $t_{0}\in\bbN$ and $\epsilon>0$. 

We will first choose $\fT$ large enough so that for every $l=2,\ldots,\frac{m_{\fT}}{N_{\fT}}$,
\[
\left\{ f_{t}\circ T^{lN_{\fT}}(w)=f_{\fT}(w)\cdot\lambda_{t_{0}}\right\} \subset\left[\left(T^{lN_{\fT}}\right)^{'}\doteq\lambda\pm\epsilon\right].\]
Since $P$ is a product measure and $f_{\fT}$ is $\mathcal{F}\left(-N_{\fT},0\right)$
measurable, if we restrict our attention to cylinders of the form
$C=[c]_{-N_{\fT}}^{0}$then \[
C\cap\left\{ f_{t}\circ T^{lN_{\fT}}(w)=f_{\fT}(w)\cdot\lambda_{t_{0}}\right\} =C\cap\left\{ f_{t}\circ T^{lN_{\fT}}(w)=f_{\fT}(c)\cdot\lambda_{t_{0}}\right\} .\]
So we will decompose $B$ to $\left\{ B\cap[c]_{-N_{\fT}}^{0}:c\in{\displaystyle \prod_{k=-N_{\fT-1}}^{0}\{0,1\}}\right\} $
and prove that for the cylinders $C=[c]_{-N_{\fT}}^{0}$ which cover
most of $\bbX$, \[
P\left(\left\{ f_{t}\circ T^{lN_{\fT}}(w)=f_{\fT}(c)\cdot\lambda_{t_{0}}\right\} \right)\geq\frac{1}{2^{N_{\fT}}}.\]
Then we make use of the independence of $B\cap C,\left\{ T^{-lN_{\fT}}B\right\} _{l=2}^{\frac{m_{\fT}}{N_{\fT}}}$
and $\left\{ \left[f_{\fT}\circ T^{lN_{\fT}}=f_{\fT}(c)\right]\right\} _{l=2}^{\frac{m_{\fT}}{N_{\fT}}}$
to get \[
P\left(\cup_{l=1}^{\frac{m_{\fT}}{N_{\fT}}}\left\{ \left(B\cap{\bf C}\right)\cap T^{-lN_{\fT}}B\cap\left[\left(T^{lN_{\fT}}\right)^{'}\doteq\lambda\pm\epsilon\right]\right\} \right)\geq(1-\epsilon)P(B\cap C).\]
Afterwards we will finish the proof by summing over $C$.

\bigskip{}

\end{onehalfspace}
\begin{fact}
Since for every $u\in\bbN$, $1<\lambda_{u}\leq e$, we have that
for every $u\in\bbN$, \[
\frac{1}{2}<\frac{\lambda_{u}}{1+\lambda_{u}}<\frac{e}{1+e}.\]
and since $\Upsilon_{u}(w)\sim{\rm Bin}\left(n_{u},\frac{\lambda_{u}}{1+\lambda_{u}}\right)$,
the strong law of large numbers implies that \[
\lim_{u\to\infty}P\left(\left\{ w:\Upsilon_{u}(w)\in\left[\frac{n_{u}}{4},\frac{3n_{u}}{4}\right]\right\} \right)=1.\]
\medskip{}

Write $\cC_{t}$ for the collection of cylinders of the form $C=\left[c\right]_{-N_{t}}^{-1}$
such that\[
\Upsilon_{t}(c)=\sum_{k=-N_{t}+1}^{-M_{t-1}}c_{k}\in\left[\frac{n_{t}}{4},\frac{3n_{t}}{4}\right].\]
It follows that \begin{equation}
\lim_{t\to\infty}P\left(\cup_{C\in\cC_{t}}C\right)=1.\label{C(t) are dense cylinders by SLLN}\end{equation}
Furthermore since $f_{t}$ is $\cF\left(-N_{t},0\right)$ measurable
then for all cylinders $C=[c]_{-N_{t}}^{0}\in\cC_{t}$, \[
\left.f_{t}\right|_{C}=f_{t}(c).\]
\end{fact}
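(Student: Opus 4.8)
The plan is to reduce all four assertions to a single quantitative concentration estimate for binomial variables. First I would dispose of the elementary inequality: the construction gives $1<\lambda_u\le e$ for every $u$ (namely $\lambda_1=e$ and $\lambda_u=e^{1/2^{k_u}}$ for $u\ge2$), and since $x\mapsto x/(1+x)$ is strictly increasing on $(0,\infty)$ this yields $\tfrac12<\tfrac{\lambda_u}{1+\lambda_u}\le\tfrac{e}{1+e}$. Writing $p_u:=\lambda_u/(1+\lambda_u)$, the feature I actually need is that $p_u\in[\tfrac12,\tfrac{e}{1+e}]$ for every $u$, so $p_u$ is uniformly bounded away from $\tfrac14$ and $\tfrac34$: with $\delta_0:=\tfrac34-\tfrac{e}{1+e}>0$ one has $\tfrac14+\delta_0\le p_u\le\tfrac34-\delta_0$ for all $u$.

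Next I would identify the distribution of $\Upsilon_u$ and apply Chebyshev. The interval $(-N_u,-M_{u-1}]$ contains exactly $N_u-M_{u-1}=n_u$ integers, all of them negative, and for each such $k$ formula~\eqref{P on N-} gives $P_k(\{1\})=p_u$; since $P$ is a product measure the coordinates $\{w_k:k\in(-N_u,-M_{u-1}]\}$ are i.i.d.\ $\mathrm{Bernoulli}(p_u)$ under $P$, so $\Upsilon_u=\sum_{k=-N_u+1}^{-M_{u-1}}w_k\sim\mathrm{Bin}(n_u,p_u)$ with mean $n_up_u$ and variance $n_up_u(1-p_u)\le n_u/4$. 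Because $\tfrac14+\delta_0\le p_u\le\tfrac34-\delta_0$ we get $[\,n_up_u-n_u\delta_0,\ n_up_u+n_u\delta_0\,]\subseteq[\,n_u/4,\ 3n_u/4\,]$, and Chebyshev's inequality gives $P\bigl(\Upsilon_u\notin[n_u/4,3n_u/4]\bigr)\le P\bigl(|\Upsilon_u-n_up_u|\ge n_u\delta_0\bigr)\le 1/(4n_u\delta_0^2)$. Since $\{n_t\}$ is an increasing integer sequence one has $n_t\to\infty$ (also readable off Property~(*), which forces $\lambda_t^{n_t/4}\ge\lambda_1^{2n_1}=e^{4}$, i.e.\ $n_t\ge16\cdot2^{k_t}\to\infty$), so the bound tends to $0$ and $\lim_{u\to\infty}P(\Upsilon_u\in[n_u/4,3n_u/4])=1$. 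I would remark that the tool here is this Chebyshev/weak-law estimate rather than the strong law literally, since the $\Upsilon_u$ are not partial sums of one fixed i.i.d.\ sequence.

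The last two claims are then bookkeeping. The variable $\Upsilon_t$ depends only on $w_{-N_t+1},\dots,w_{-M_{t-1}}$, hence is constant on each cylinder $[c]_{-N_t}^{-1}$, and these cylinders partition $\bbX$; therefore $\bigcup_{C\in\cC_t}C=\{w:\Upsilon_t(w)\in[n_t/4,3n_t/4]\}$, so \eqref{C(t) are dense cylinders by SLLN} is exactly the limit just established (with $u=t$). Finally, for every $u\le t$ the block $(-N_u,-M_{u-1}]$ is contained in $(-N_t,0]$ (because $N_u\le N_t$ and $M_{u-1}\ge M_0=1$), so each $\Upsilon_u$ with $u\le t$, and hence $f_t=\prod_{u=1}^t\lambda_u^{\Upsilon_u}$, is $\cF(-N_t,0)$-measurable and in particular constant on every cylinder $[c]_{-N_t}^{0}$, with value $f_t(c)$. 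I do not expect a genuine obstacle; the only points needing a little care are invoking a quantitative concentration bound in place of a verbatim appeal to the SLLN, and checking from the construction that $n_t\to\infty$ so that the Chebyshev bound actually vanishes.
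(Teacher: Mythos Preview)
Your proposal is correct and follows the same line the paper sketches: the paper simply asserts the fact with the phrase ``the strong law of large numbers implies'' and provides no further argument, so there is essentially nothing to compare beyond your filling in the details. Your replacement of the verbatim SLLN by a Chebyshev/weak-law bound is in fact the right clarification, since the $\Upsilon_u$ are binomials with varying $(n_u,p_u)$ rather than partial sums of a single i.i.d.\ sequence; the paper's appeal to the SLLN should be read in exactly this spirit.
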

\begin{lem}
\label{Lem: inequlity which helps with the derivative}Let $t_{0}\in\bbN$.
For every $t$ such that $t>t_{0}+1$ the following holds:

For every cylinder $C=[c]_{-N_{t}}^{0}\in\cC_{t}$ and for every $l=2,\ldots,\frac{m_{t}}{N_{t}}$,
\[
P\left(\left\{ w:f_{t}\circ T^{lN_{\fT}}(w)=\lambda_{t_{0}}f_{t}(c)\right\} \right)\geq\frac{1}{2^{N_{t}}}.\]
 \end{lem}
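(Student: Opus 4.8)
The plan is to express the event in question in terms of the cylinder coordinates that $T^{lN_{\fT}}$ actually moves around, and then to exhibit one explicit sub-cylinder of $\bbX$ on which the identity $f_t\circ T^{lN_{\fT}}(w)=\lambda_{t_0}f_t(c)$ holds; since that sub-cylinder will be determined by prescribing finitely many coordinates (at most $N_t$ of them), its $P$-measure is at least $2^{-N_t}$ once we check that each prescribed coordinate lies in a block where $P_k$ puts mass $\geq 1/2$ on the prescribed value, which is true because $P_k(\{0\}),P_k(\{1\})\geq 1/(1+e)\geq$ nothing useful — so more carefully, the bound $2^{-N_t}$ will come from the fact that we only need to pin down coordinates in the ``$\tfrac12$-blocks'', or we accept the crude bound $(1/(1+e))^{N_t}$ and note that this is what is really needed downstream; I will follow the paper's normalization and aim for the stated $2^{-N_t}$ by being careful about which coordinates must be prescribed.

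Concretely, recall $f_t(w)=\prod_{u=1}^{t}\lambda_u^{\Upsilon_u(w)}$ and, by the remark preceding the lemma, $\Upsilon_u\circ T^{lN_{\fT}}(w)-\Upsilon_u(w)$ ranges over $[-n_u,n_u]$ as the relevant coordinates of $w$ vary, and moreover these differences depend only on the coordinates of $w$ in the windows $(-N_u,-M_{u-1}]$ and their $lN_{\fT}$-shifts. So I would write
\[
\frac{f_t\circ T^{lN_{\fT}}(w)}{f_t(w)}=\prod_{u=1}^{t}\lambda_u^{\Upsilon_u\circ T^{lN_{\fT}}(w)-\Upsilon_u(w)},
\]
and observe that on a cylinder $C=[c]_{-N_t}^{0}\in\cC_t$ the denominator equals the fixed number $f_t(c)$. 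Thus the target identity becomes
\[
\prod_{u=1}^{t}\lambda_u^{\Upsilon_u\circ T^{lN_{\fT}}(w)-\Upsilon_u(w)}=\frac{\lambda_{t_0}}{f_t(c)}\cdot f_t(c)\cdot\frac{1}{f_t(c)}\ \Longleftrightarrow\ f_t\circ T^{lN_{\fT}}(w)=\lambda_{t_0}f_t(c),
\]
i.e. we need the shifted count-vector $(\Upsilon_u\circ T^{lN_{\fT}}(w))_{u\le t}$ to realize a prescribed product value. Using Remark~\ref{rem:A-reformulation-of property *} with the index $u=t$: since $\lambda_{t_0}/f_{t-1}(\text{anything in }A_{t-1})\in\lambda_t^{\bbZ}$ and property~(*) guarantees a representative exponent $p$ with $|p|<n_t/4$, I can choose the coordinates of $w$ inside the window $(-N_t,-M_{t-1}]$ shifted by $lN_{\fT}$ — which, because $l\le m_t/N_t$ and $lN_{\fT}<m_t$, lands inside the ``$\tfrac12$-block'' $(-M_t,-N_t]$ under the appropriate indices — to force $\Upsilon_t\circ T^{lN_{\fT}}(w)$ to any prescribed value in the range $[\tfrac{n_t}{4},\tfrac{3n_t}{4}]$ (room is available since $C\in\cC_t$ pins $\Upsilon_t(c)$ there and $|p|<n_t/4$), while simultaneously leaving the lower-index counts $\Upsilon_u\circ T^{lN_{\fT}}(w)$, $u<t$, free to be set independently. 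Then I prescribe, for each $u<t$, the coordinates of $w$ in the shifted window so that $\Upsilon_u\circ T^{lN_{\fT}}(w)=\Upsilon_u(w)$; combined with the choice at level $t$ this yields exactly $f_t\circ T^{lN_{\fT}}(w)=\lambda_t^{p}f_{t-1}(w)\cdot(\text{something})=\lambda_{t_0}f_t(c)$ after unwinding. The event that $w$ has all these prescribed coordinates is a cylinder over at most $N_t$ places, all of which lie either in $\tfrac12$-blocks or in blocks where $P_k$ charges the prescribed symbol with probability $\ge 1/2$ (the latter arranged by always prescribing the majority symbol), so its $P$-measure is $\ge 2^{-N_t}$.

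The main obstacle I anticipate is the bookkeeping of which coordinates of $w$ control $\Upsilon_u\circ T^{lN_{\fT}}$ and verifying that the prescriptions at the different levels $u=1,\dots,t$ involve disjoint sets of coordinates, so that the measures multiply cleanly and the total number of prescribed coordinates is genuinely bounded by $N_t$ (not $N_t$ times $t$). This hinges on the fact that the windows $(-N_u,-M_{u-1}]$ are disjoint and that shifting by $lN_{\fT}<m_t\le m_u$ keeps the shifted window of level $u$ inside a single homogeneous block, so the relevant coordinates for level $u$ lie in $(-M_u,-M_{u-1}]$, and these are disjoint across $u$; summing the window lengths gives $\le M_t - M_0 < $ something, but since we only need to prescribe within the first $N_t$ coordinates to the left of $0$ after accounting for the shift, the bound $2^{-N_t}$ is what comes out. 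A secondary subtlety is confirming the ``room'' inequality: that $\Upsilon_t(c)\in[\tfrac{n_t}{4},\tfrac{3n_t}{4}]$ together with $|p|<n_t/4$ leaves the prescribed value of $\Upsilon_t\circ T^{lN_{\fT}}(w)$ inside $[0,n_t]$, which is forced by property~(*)'s quantitative bound $\lambda_t^{n_t/4}\ge\max\{a^2:a\in A_{t-1}\}$ and the hypothesis $t>t_0+1$ ensuring $\lambda_{t_0}\in A_{t-1}$. Once these disjointness and range facts are nailed down, the measure estimate is immediate.
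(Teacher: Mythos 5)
Your combinatorial construction is essentially the one in the paper: use Remark \ref{rem:A-reformulation-of property *} (property ({*})) to produce an exponent $p$ with $\left|p\right|<n_{t}/4$, use $C\in\cC_{t}$ (i.e. $\Upsilon_{t}(c)\in\left[\frac{n_{t}}{4},\frac{3n_{t}}{4}\right]$) to guarantee the target value $\Upsilon_{t}(c)+p$ lies in $\left[0,n_{t}\right]$, and then prescribe the coordinates in the shifted level-$u$ windows (which are pairwise disjoint, being translates by the same amount $lN_{t}$ of the disjoint windows $\left(-N_{u},-M_{u-1}\right]$) so as to realize the required counts. Your variant of handling the levels $u<t$ --- forcing $\Upsilon_{u}\circ T^{lN_{t}}=\Upsilon_{u}(c)$ and solving $\lambda_{t}^{p}=\lambda_{t_{0}}$ --- is a harmless modification of the paper's choice, which instead sets those counts to $0$ and solves $\lambda_{t}^{p}=\lambda_{t_{0}}/f_{t-1}(c)$; both are covered by property ({*}) since $\lambda_{t_{0}}$ and $1/f_{t-1}(c)$ lie in $A_{t-1}$.

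The genuine gap is in the measure estimate, which is the quantitative content of the lemma. You claim the prescribed coordinates ``land inside the $\tfrac{1}{2}$-block $\left(-M_{t},-N_{t}\right]$''; they do not. Since $\left(T^{lN_{t}}w\right)_{i}=w_{i+lN_{t}}$ and $f_{t}$ is $\cF\left(-N_{t},0\right)$-measurable, the function $f_{t}\circ T^{lN_{t}}$ reads the coordinates of $w$ at positions in $\left((l-1)N_{t},lN_{t}\right]$, i.e. at indices $\geq N_{t}+1>0$ (this is exactly where $l\geq2$ is used); the block $\left(-M_{t},-N_{t}\right]$ is where the coordinates of $T^{-lN_{t}}B$ live in the \emph{next} lemma --- you have shifted in the wrong direction. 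Your fallback (``accept the crude bound $(1/(1+e))^{N_{t}}$'' or ``prescribe the majority symbol'') does not repair this: the symbols you prescribe are dictated by the requirement that the counts $\Upsilon_{u}\circ T^{lN_{t}}$ hit specific values, so you are not free to take majority symbols, and $(1/(1+e))^{N_{t}}<2^{-N_{t}}$ is weaker than what the lemma asserts. The missing (and needed) observation is precisely the index computation above: every coordinate on which the event depends is nonnegative, where $P_{k}\left(\{0\}\right)=P_{k}\left(\{1\}\right)=\frac{1}{2}$ by the definition of $P$, so the cylinder fixing those $N_{t}$ coordinates has probability exactly $2^{-N_{t}}$. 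Once you insert this one line, your argument closes and coincides with the paper's proof.
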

\begin{proof}
\begin{onehalfspace}
Let $C\in\cC_{t}$ and $2\leq l\leq\frac{m_{t}}{N_{t}}$.

We will build a cylinder set $D=D(l,C,t_{0})=\left[d\right]_{\left(l-1\right)N_{t}+1}^{lN_{t}}$
such that\[
D\subset\left\{ w:f_{t}\circ T^{lN_{t}}(w)=f_{t}(c)\cdot\lambda_{t_{0}}\right\} .\]
Notice that \begin{equation}
\frac{f_{t}\circ T^{lN_{t}}(w)}{f_{t}(c)}=\lambda_{t-1}^{\Upsilon_{t}\circ T^{lN_{t}}(w)-\Upsilon_{t}(c)}\cdot\frac{f_{t-1}\circ T^{lN_{t}}(w)}{f_{t-1}(c)}\label{eq: First form of f_t /f_t}\end{equation}
Set \[
d_{k}=0\ {\rm for\ every\ }k\in\left(\left(l-1\right)N_{t}+M_{t-1}+1,lN_{t}\right].\]
It is easy to see that for every $u<t-1$, $\Upsilon_{u}\circ T^{lN_{t}}$
is $\cF\left(\left(l-1\right)N_{t}+M_{t-1},lN_{t}\right]$ measurable
and therefore for every $w\in D,u<t-1$, \[
\Upsilon_{u}\circ T^{lN_{t}}(w)=\sum_{k=lN_{t}-N_{u}+1}^{lN_{t}-M_{u-1}}d_{k}=0.\]
So for every $w\in D,$ \[
f_{t-1}\circ T^{lN_{t}}(w)=\prod_{u=1}^{t-1}\lambda_{u}^{\Upsilon_{u}\circ T^{lN_{t}(w)}}=1\]
and \[
\frac{f_{t}\circ T^{lN_{t}}(w)}{f_{t}(c)}=\lambda_{t-1}^{\Upsilon_{t}\circ T^{lN_{t}}(w)-\Upsilon_{t}(c)}\cdot\frac{1}{f_{t-1}(c)}.\]
Since $n_{t}$ satisfies property ({*}) {[} see Remark \ref{rem:A-reformulation-of property *}{]},
there exists a $p\in\left(-\frac{n_{t-1}}{4},\frac{n_{t-1}}{4}\right)$
such that\begin{equation}
\lambda_{t}^{p}=\lambda_{t_{0}}\cdot\left(\frac{1}{f_{t-1}(c)}\right).\label{p= prod/prod}\end{equation}
Also since $C\in\cC_{t}$ , $\Upsilon_{t}(c)\in\left(\frac{n_{t}}{4},\frac{3n_{t}}{4}\right)$
and so \[
p\in\left(-\frac{n_{t}}{4},\frac{n_{t}}{4}\right)\subset{\rm {\displaystyle Image}}\left(\Upsilon_{t}\circ T^{lN_{t}}(w)-\Upsilon_{t}(c):w\in\bbX\right).\]
Since $\Upsilon_{t}\circ T^{lN_{t}}(w)$ is $\cF\left(\left(l-1\right)N_{t-1}+1,\left(l-1\right)N_{t}+M_{t-1}-1\right)$
measurable this means that there exists \[
\tilde{w}=\left\{ \tilde{w}_{k}\right\} _{k=\left(l-1\right)N_{t}+1}^{\left(l-1\right)N_{t}+M_{t-1}}\in\prod_{k=lN_{t}-N_{t-1}}^{lN_{t}-M_{t-2}}\{0,1\}\]
such that \[
p=\Upsilon_{t}\circ T^{lN_{t}}({\bf \tilde{w}})-\Upsilon_{t-1}(c).\]
By setting $d_{k}=\tilde{w}_{k}$ for every $ $$lN_{t}-N_{t-1}\leq k\leq lN_{t}-M_{t-2}$
we have finished the construction of $D$. 

Observe that for every $w\in D$,\[
\frac{f_{\fT}\circ T^{lN_{t}}(w)}{f_{t}(c)}=\frac{f_{t}\circ T^{lN_{t}}(d)}{f_{t}(c)}=\lambda_{t}^{\Upsilon_{t}\circ T^{lN_{t}}(\tilde{w})-\Upsilon_{t}(c)}\cdot\frac{1}{f_{t-1}(c)}\]
\[
=\lambda_{t}^{p}\cdot\frac{1}{f_{t-1}(c)}=\lambda_{t_{0}}.\]
Therefore \[
D\subset\left\{ w:f_{t}\circ T^{lN_{t}}(w)=f_{t}(c)\cdot\lambda_{t_{0}}\right\} .\]
Since $P(D)={\displaystyle \prod_{k=lN_{\fT}}^{\left(l+1\right)N_{\fT}}}P_{k}\left(d_{k}\right)=\frac{1}{2^{N_{t}}}$
the lemma follows.\end{onehalfspace}

\end{proof}
Notation: Given a cylinder $C=[c]_{-N_{t-1}}^{0}$ , $t_{0}\in\bbN$
and $l=2,\ldots,\frac{m_{t}}{N_{t}}$ , we will abuse notation and
write $D(C,t_{0},l)$ for $\left\{ w:f_{t}\circ T^{lN_{t}}(w)=f_{t}(c)\cdot\lambda_{t_{0}}\right\} .$
$ $
\begin{lem}
Let $B=[b]_{-n}^{n}$, and $t_{0}\in\bbN$. Then for every $t\in\bbN$
such that $2n<N_{t}$ and $t_{0}<t$ the following holds:

For every $C\in\cC_{t}$ \begin{equation}
P\left((B\cap C)\cap\overline{\left(\cup_{l=2}^{\frac{m_{t}}{N_{t}}}\left[T^{-lN_{t}}B\cap D\left(C,t_{0},l\right)\right]\right)}\right)\leq\left(1-\frac{1}{2^{2N_{t}}}\right)^{2^{3N_{t}}}P(B\cap C).\label{eq: Almost EVC for special cylinders}\end{equation}
\end{lem}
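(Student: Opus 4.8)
The plan is to prove \eqref{eq: Almost EVC for special cylinders} by an iterated--conditioning (second Borel--Cantelli type) argument, resting on the product structure of $P$, on the bound $P(D(C,t_0,l))\geq 2^{-N_t}$ of Lemma~\ref{Lem: inequlity which helps with the derivative}, and on the identity $\tfrac{m_t}{N_t}=2+2^{3N_t}$ coming from \eqref{constraint of m(t)}. Fix $B=[b]_{-n}^{n}$, $t_0$, $t$ with $2n<N_t$ and $t_0<t$, and $C=[c]_{-N_t}^{0}\in\cC_t$; put $L:=\tfrac{m_t}{N_t}$, and for $2\leq l\leq L$ set $E_l:=T^{-lN_t}B\cap D(C,t_0,l)$ and $O_l:=\bigl((l-1)N_t,(l-1)N_t+n\bigr]$. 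First I would record the supports. The set $T^{-lN_t}B$ is $\cF(lN_t-n,lN_t+n)$--measurable, and by the definition of $D(C,t_0,l)$ and of $\Upsilon_u$ the set $D(C,t_0,l)$ depends only on the coordinates in $\bigcup_{u=1}^{t}\bigl(lN_t-N_u,\,lN_t-M_{u-1}\bigr]\subseteq\bigl((l-1)N_t,lN_t\bigr)$; hence $E_l$ depends only on coordinates in $\bigl((l-1)N_t,lN_t+n\bigr]$. Since $2n<N_t$, the coordinates of $E_l$ with index $\leq (l-1)N_t+n$ are exactly those in $O_l$, while for $2\leq l'<l$ the whole support of $E_{l'}$, together with the support $[-N_t,n]$ of $B\cap C$, sits at indices $\leq (l-1)N_t+n$.

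The heart of the argument is the claim that for every $2\leq l\leq L$,
\[
P\bigl(E_l\ \big|\ \sigma(w_j:\ j\leq(l-1)N_t+n)\bigr)=P\bigl(E_l\ \big|\ \sigma(w_j:\ j\in O_l)\bigr)\ \geq\ 2^{-2N_t}\qquad\text{a.s.},
\]
the equality being immediate from the support analysis and the product structure. To prove the inequality, freeze the coordinates on $O_l$ arbitrarily; it suffices to exhibit a cylinder $Z\subseteq E_l$ based only on coordinates disjoint from $O_l$ with $P(Z)\geq 2^{-2N_t}$. For $Z$ I would take the intersection of the cylinder $\{w_j=b_{j-lN_t}:\ lN_t-n\leq j\leq lN_t+n\}$ --- which forces $T^{-lN_t}B$ and, all these indices being $\geq 0$, has probability $2^{-(2n+1)}$ --- with a suitable cylinder on $W':=\bigl(\bigcup_{u=1}^{t}(lN_t-N_u,lN_t-M_{u-1}]\bigr)\setminus\bigl(O_l\cup[lN_t-n,lN_t-1]\bigr)$ chosen so that $f_t\circ T^{lN_t}=f_t(c)\lambda_{t_0}$. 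The three coordinate blocks $O_l$, $[lN_t-n,lN_t+n]$ and $W'$ are pairwise disjoint (using $2n<N_t$), the intervals $(lN_t-N_u,lN_t-M_{u-1}]$ are pairwise disjoint (as $N_u<M_u$) so $|W'|\leq\sum_{u=1}^t n_u=N_t-1-\sum_{u=1}^{t-1}m_u<N_t$, and therefore $P(Z)=2^{-(2n+1)}2^{-|W'|}\geq 2^{-(2n+1)-(N_t-1)}=2^{-(2n+N_t)}\geq 2^{-2N_t}$. Consequently $P\bigl(E_l\mid\sigma(w_j:j\leq(l-1)N_t+n)\bigr)\geq 2^{-2N_t}$, and since by the support analysis $B\cap C$ and $E_2,\dots,E_{l-1}$ are measurable with respect to this $\sigma$-algebra, also $P\bigl(E_l\mid\sigma(\mathbf{1}_{B\cap C},\mathbf{1}_{E_2},\dots,\mathbf{1}_{E_{l-1}})\bigr)\geq 2^{-2N_t}$.

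Given this estimate I would conclude by telescoping: conditioning on $\sigma(\mathbf{1}_{B\cap C},\mathbf{1}_{E_2},\dots,\mathbf{1}_{E_{l-1}})$ and removing one factor $(1-2^{-2N_t})$ at each of the $L-1$ steps $l=L,L-1,\dots,2$,
\[
P\Bigl((B\cap C)\cap\bigcap_{l=2}^{L}\overline{E_l}\Bigr)=\mathbb{E}\Bigl[\mathbf{1}_{B\cap C}\prod_{l=2}^{L}\mathbf{1}_{\overline{E_l}}\Bigr]\leq\bigl(1-2^{-2N_t}\bigr)^{L-1}P(B\cap C).
\]
By \eqref{constraint of m(t)}, $L-1=\tfrac{m_t}{N_t}-1=1+2^{3N_t}\geq 2^{3N_t}$, and since $0<1-2^{-2N_t}<1$ this is exactly \eqref{eq: Almost EVC for special cylinders}.

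The step I expect to be the main obstacle is the existence of the cylinder on $W'$: after the coordinates on $O_l$ and those of $T^{-lN_t}B$ have been frozen, one must still be able to force $f_t\circ T^{lN_t}=f_t(c)\lambda_{t_0}$. This is precisely where the construction is used. Proceeding as in Lemma~\ref{Lem: inequlity which helps with the derivative}, one sets to $0$ every still-free coordinate feeding $\Upsilon_u\circ T^{lN_t}$ with $u<t$; the remaining discrepancy between $f_t\circ T^{lN_t}$ and $f_t(c)\lambda_{t_0}$ then lies in $A_{t-1}\cdot A_{t-1}$ and, by (Property~{*}) in the form of Remark~\ref{rem:A-reformulation-of property *}, equals $\lambda_t^{p}$ for some $p$ with $|p|<n_{t-1}/4$, hence $|p|$ negligible compared to $n_t$. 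Because $C\in\cC_t$ forces $\Upsilon_t(c)\in[\tfrac{n_t}{4},\tfrac{3n_t}{4}]$ while $n$ is a fixed constant, the value $\Upsilon_t\circ T^{lN_t}=\Upsilon_t(c)+p$ one is required to realize still lies comfortably inside $[0,n_t]$ even after discounting the at most $n+O(1)$ coordinates of the top ($u=t$) block that were frozen by the conditioning on $O_l$ and by $T^{-lN_t}B$; hence it can be realized by a choice of the free coordinates of that block. The only genuinely delicate point is to check that the freezing intrudes on only $O(n)$ coordinates of each $\Upsilon_u$-block, which is what keeps this margin real --- a routine but slightly tedious computation given the explicit placement of the blocks.
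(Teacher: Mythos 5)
Your proposal takes a genuinely different route from the paper, and the difference starts with where you place $T^{-lN_t}B$. The paper reads $T^{-lN_t}B$ as sitting on the coordinates $[-n-lN_t,\,n-lN_t]\subset(-M_t,-N_t]$ (see the line ``$T^{-lN_t}B\in\cF(-n-lN_t,n-lN_t)$'' and \eqref{eq: T^-N B is large}); with that reading $B\cap C$ (support $[-N_t,n]$), the translates of $B$ (deep in the symmetric region) and the sets $D(C,t_0,l)$ (supports $((l-1)N_t,lN_t)$, pairwise disjoint) are mutually independent, so $P(T^{-lN_t}B\cap D(C,t_0,l))\geq 2^{-2n}\cdot 2^{-N_t}\geq 2^{-2N_t}$ by Lemma \ref{Lem: inequlity which helps with the derivative}, and the stated bound is literally the product of the complement probabilities over $l=2,\dots,\frac{m_t}{N_t}-1$, with exponent $\frac{m_t}{N_t}-2=2^{3N_t}$ from \eqref{constraint of m(t)}. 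You instead read $T^{-lN_t}B$ as $\{w:T^{lN_t}w\in B\}$, supported on $[lN_t-n,lN_t+n]$; then independence genuinely fails (the $B$-block of $E_l$ overlaps the coordinate blocks of $D(C,t_0,l)$ and $D(C,t_0,l+1)$), and you replace it by iterated conditioning. The conditioning skeleton itself is sound: if $P(E_l\mid\sigma(w_j:j\leq(l-1)N_t+n))\geq 2^{-2N_t}$ a.s.\ then the telescoped product and $L-1\geq 2^{3N_t}$ give the claim.

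The genuine gap is exactly at what you call the main obstacle, and it is not routine. First, a cylinder $Z\subseteq E_l$ using no coordinate of $O_l$ cannot exist at all: flipping an $O_l$-coordinate changes $\Upsilon_t\circ T^{lN_t}$ by $1$, hence changes $f_t\circ T^{lN_t}$, so no set avoiding those coordinates can be contained in $D(C,t_0,l)$. What you actually need is, for every frozen configuration $\omega$ on $O_l$, a cylinder $Z_\omega$ on the remaining coordinates with $\{O_l=\omega\}\cap Z_\omega\subseteq E_l$ and $P(Z_\omega)\geq 2^{-2N_t}$, and the feasibility of forcing $f_t\circ T^{lN_t}=f_t(c)\lambda_{t_0}$ now depends on $\omega$. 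After setting the free low-block coordinates to $0$, the target value of $\Upsilon_t\circ T^{lN_t}$ is $\Upsilon_t(c)+q$ with $\lambda_t^{q}=\lambda_{t_0}f_{t-1}(c)/a$, where $a$ records the $B$-block coordinates landing in the lower blocks; note this is a product of three $A_{t-1}$-type factors, whereas Property {*} (Remark \ref{rem:A-reformulation-of property *}) controls products of two, so the best generic bound is $|q|$ up to roughly $\tfrac{3n_t}{8}$, and the target can be as large as about $\tfrac{15n_t}{16}$ or as small as $\tfrac{n_t}{8}$. Meanwhile up to $2n$ coordinates of the top block are frozen (by $O_l$ and, when $n\geq M_{t-1}$, by the $B$-block), so the attainable values form an interval of length $n_t-O(n)$ whose endpoints depend on $\omega$: if $\omega\equiv 0$ the maximum attainable is $n_t$ minus the number frozen, if $\omega\equiv 1$ the minimum attainable is the number frozen. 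Hence ``comfortably inside $[0,n_t]$'' requires something like $n\leq n_t/32$, which does not follow from the lemma's hypotheses $2n<N_t$, $t_0<t$ (the paper states $N_t=M_{t-1}+n_t$ but never a bound forcing $n_t\gg M_{t-1}$ uniformly in $t$). So the crucial conditional bound $P(E_l\mid\cdot)\geq 2^{-2N_t}$ is not established at the stated generality; to repair your version you would either have to add a hypothesis of the form ``$t$ large enough given $n$'' (harmless for the use in Lemma \ref{lem: EVC property for the shift}) and do the frozen-coordinate accounting honestly, or restore independence outright, e.g.\ by using only every other $l$, or by adopting the paper's placement of $T^{-lN_t}B$, under which none of this machinery is needed.
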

\begin{proof}
\begin{onehalfspace}
Notice that $f_{t}$  is $\cF\left(-N_{t-1},0\right)$ measurable
and therefore for every $l\leq\frac{m_{t}}{N_{t}}$, $f_{t}\circ T^{lN_{t}}$
is $\cF\left(\left(l-1\right)N_{t},lN_{t}\right)$ measurable. By
the product structure of $P$ , the random variables $\left\{ f_{t}\circ T^{lN_{t}}\right\} _{l=0}^{\frac{m_{t}}{N_{t}}}$
are independent. Therefore the sets $\left\{ D\left(C,t_{0},l\right)\right\} _{l=2}^{\frac{m_{t}}{N_{t}}}$
are independent and each one is measurable $\cF(N_{t},\infty).$ 

Since $B\in\mathcal{F}(-n,n)$, then for every $l\geq2$, $T^{-lN_{t}}B\in\cF\left(-n-lN_{t},n-lN_{t}\right)$.
Since $2n<N_{t}$ the sets $\left\{ T^{-lN_{t}}B\right\} _{l=2}^{\frac{m_{t}}{N_{t}}}$
are independent and independent of $\mathcal{F}\left(-N_{t},\infty\right)$.
So the sets $\left\{ D\left(C,t_{0},l\right)\right\} _{l=2}^{\frac{m_{t}}{N_{t}}},(B\cap C)$
and $\left\{ T^{-lN_{t}}B\right\} _{l=2}^{\frac{m_{t}}{N_{t}}}$ are
independent. \\
Therefore for every $l=2,...,\frac{m_{t}}{N_{t}}$\[
P\left(T^{-lN_{t}}B\cap D\left(C,t_{0},l\right)\right)=P\left(T^{-lN_{t}}B\right)P\left(D\left(C,t_{0},l\right)\right).\]
Plugging in Lemma \ref{Lem: inequlity which helps with the derivative}
we have \[
P\left(T^{-lN_{t}}B\cap D\left(C,t_{0},l\right)\right)\geq\frac{P\left(T^{-lN_{t}}B\right)}{2^{N_{t}}}\]
Furthermore, since for every $2\leq l\leq\frac{m_{t}}{N_{t}}-1,$
\[
\left[-n-lN_{t},n-lN_{t}\right]\subset\left(-M_{t},-N_{t}\right],\]
the product structure of $P$ (see \eqref{P on N-}) implies \begin{equation}
P\left(T^{-lN_{t}}B\right)=\frac{1}{2^{2n}}\geq\frac{1}{2^{N_{t}}},\ {\rm for\ every}\ l=2,...,\frac{m_{t}}{N_{t}}-1.\label{eq: T^-N B is large}\end{equation}
Therefore

\begin{equation}
P\left(\overline{\left[T^{-lN_{t}}B\cap D\left(C,t_{0},l\right)\right]}\right)\leq\left(1-\frac{1}{2^{2N_{t}}}\right).\label{eq:the compliment is small}\end{equation}

Since the sets $\left\{ T^{-lN_{t}}B\cap D\left(C,t_{0},l\right)\right\} _{l=2}^{\frac{m_{t}}{N_{t}}},\left(B\cap C\right)$
are independent ( and \eqref{eq:the compliment is small}) ,\[
P\left(B\cap C\cap\overline{\left(\cup_{l=2}^{\frac{m_{t}}{N_{t}}-1}\left[T^{-lN_{t}}B\cap D\left(C,t_{0},l\right)\right]\right)}\right)\]
\[
=P\left(B\cap C\right)P\left(\cap_{l=2}^{\frac{m_{t}}{N_{t}}}\overline{\left[T^{-lN_{t}}B\cap D\left(C,t_{0},l\right)\right]}\right)\]
\[
=P\left(B\cap C\right)\cdot\prod_{l=2}^{\frac{m_{t}}{N_{t}}-1}P\left(\overline{\left[T^{-lN_{t}}B\cap D\left(C,t_{0},l\right)\right]}\right).\]
\[
\leq\left(1-\frac{1}{2^{2N_{t}}}\right)^{\frac{m_{t}}{N_{t}}-2}P(B\cap C)=\left(1-\frac{1}{2^{2N_{t}}}\right)^{2^{3N_{t}}}P(B\cap C).\]
\end{onehalfspace}

\end{proof}
\begin{proof}
\begin{onehalfspace}
\textsf{\textit{(Lemma \ref{lem: EVC property for the shift}}}):
Let $B=[b]_{-n}^{n}$ be fixed, $\epsilon>0$ and $t_{0}\in\bbN$.
We want to choose a $\fT\in\bbN$ such that \eqref{EVC} holds. 

\textsf{\textit{The choice of $\fT$:}} Choose a $\fT\in\bbN$ which
satisfies the following properties:\begin{equation}
\fT>t_{0},\label{How Large is T_1}\end{equation}
(By this condition $\lambda_{t_{0}}\in A_{\fT}$ )\[
2n<N_{\fT},\]
\begin{equation}
P\left(\cup_{C\in\cC_{\fT}}C\right)\geq1-0.05\cdot P(B),\label{How large is T_2}\end{equation}
(so {}``nice'' cylinders exhaust enough of $B$) and \begin{equation}
\left(1-\frac{1}{2^{2N_{\fT}}}\right)^{2^{3N_{\fT}}}<0.01.\label{How large is T_3}\end{equation}

By Lemma \ref{lem:second derivative lemma} we can, by enlarging $\fT$
if needed , demand in addition to (\ref{How Large is T_1}).(\ref{How large is T_2})
and (\ref{How large is T_3}) that $\fT$ satisfies the following
: For every\\
 $N_{\fT}\leq k<M_{\fT}$,\[
\forall w\in\bbX.\left(T^{k}\right)'(w)\doteq\left(1\pm\frac{\epsilon}{3}\right)\frac{f_{\fT}\left(T^{k}w\right)}{f_{\fT}(w)}.\]
This means (derivative approximation),\begin{equation}
\forall1\leq l\leq\frac{m_{\fT}}{N_{\fT}},\ \left(T^{lN_{\fT}}\right)'(\cdot)\doteq\left(1\pm\frac{\epsilon}{3}\right)\frac{f_{\fT}\circ T^{lN_{\fT}}\left(\cdot\right)}{f_{\fT}(\cdot)}.\label{How Large is T_4}\end{equation}
We will now prove that this $\fT$ satisfies (\ref{EVC}) with $B,\epsilon$
and $t_{0}$. 

First we will show that for every $C=[c]_{-N_{\fT}}^{0}\in C_{\fT}$,
\[
P\left(\cup_{l=1}^{\frac{m_{\fT}}{N_{\fT}}}\left\{ \left(B\cap C\right)\cap T^{-lN_{\fT}}B\cap\left[\left(T^{lN_{\fT}}\right)^{'}\doteq\lambda\pm\epsilon\right]\right\} \right)\geq0.99\cdot P(B\cap C).\]
\medskip{}

Since $\fT$ satisfies \eqref{How Large is T_4} and $\lambda_{t_{0}}<3$
then for every $l=2,\ldots,\frac{m_{\fT}}{N_{\fT}}$, \[
\left\{ w:f_{\fT}\circ T^{lN_{\fT}}(w)=f_{\fT}(w)\cdot\lambda_{t_{0}}\right\} \subset\left[\left(T^{lN_{\fT}}\right)^{'}\doteq\lambda\pm\epsilon\right].\]
Since $f_{\fT}$ is $\cF\left(-N_{\fT},0\right)$ measurable, \[
C\cap\left\{ w:f_{\fT}\circ T^{lN_{\fT}}(w)=f_{\fT}(w)\cdot\lambda_{t_{0}}\right\} =C\cap\left\{ w:f_{\fT}\circ T^{lN_{\fT}}(w)=f_{\fT}(c)\cdot\lambda_{t_{0}}\right\} \]
\[
=C\cap D\left(C,t_{0},l\right).\]
Therefore $\cup_{l=1}^{\frac{m_{\fT}}{N_{\fT}}}\left\{ \left(B\cap C\right)\cap T^{-lN_{\fT}}B\cap D\left(C,t_{0},l\right)\right\} $
is a subset of\\
 $\cup_{l=1}^{\frac{m_{\fT}}{N_{\fT}}}\left\{ \left(B\cap C\right)\cap T^{-lN_{\fT}}B\cap\left[\left(T^{lN_{\fT}}\right)^{'}\doteq\lambda\pm\epsilon\right]\right\} $.

So by Lemma \ref{Lem: inequlity which helps with the derivative},\[
P\left((B\cap C)\cap\overline{\left(\cup_{l=1}^{\frac{m_{\fT}}{N_{\fT}}}T^{-lN_{\fT}}B\cap\left[\left(T^{lN_{\fT}}\right)^{'}\doteq\lambda\pm\epsilon\right]\right)}\right)\]
 \[
\leq P\left((B\cap C)\cap\overline{\left(\cup_{l=1}^{\frac{m_{\fT}}{N_{\fT}}}T^{-lN_{\fT}}B\cap D\left(C,t_{0},l\right)\right)}\right)\]
\[
\leq\left(1-\frac{1}{2^{2N_{\fT}}}\right)^{2^{3N_{\tau}}}P(B\cap C)<0.01\cdot P(B\cap C).\]
\[
\]
 The lemma follows by ( the last inequality uses \ref{How large is T_2})
\[
P\left(B\cap\overline{\left(\cup_{l=1}^{\frac{m_{\fT}}{N_{\fT}}}\left[T^{-lN_{\fT}}B\cap\left\{ \left(T^{lN_{\fT}}\right)^{'}(\cdot)=\lambda_{t_{0}}\pm\epsilon\right\} \right]\right)}\right)\leq P\left(\overline{\left[\cup_{C\in\cC_{\fT}}C\right]}\right)+\]
\[
\sum_{C\in\cC_{\fT-1}}P\left(B\cap C\cap\overline{\left(\cup_{l=1}^{\frac{m_{\fT}}{N_{\fT}}}\left[T^{-lN_{\fT}}B\cap\left[\left(T^{lN_{\fT}}\right)^{'}(\cdot)=\lambda_{t_{0}}\pm\epsilon\right]\right]\right)}\right)<0.1\cdot P(B).\]
\end{onehalfspace}

\end{proof}
Then next lemma is a standard lemma in measure theory. The proof is
included here for the sake of completeness, 
\begin{lem}
For all $\epsilon>0$ and for every $A\in\mathcal{B}_{+}$ there exists
a cylinder of the form $B=\left[b_{-n},...,b_{n}\right]_{0}$ for
which\label{lem:Density of Cylinders} \[
\frac{P\left(A\cap B\right)}{P(B)}>1-\epsilon.\]
\end{lem}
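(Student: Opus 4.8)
The plan is to prove the standard density statement (Lemma \ref{lem:Density of Cylinders}) by a Lebesgue-density / martingale argument adapted to the cylinder $\sigma$-algebras. Recall that the cylinder algebras $\cF(n) = \cF(-n,n)$ increase to $\cB(\bbX)$, and that the finite unions of cylinders $[b]_{-n}^{n}$ form an algebra generating $\cB(\bbX)$. So first I would fix $A \in \cB_{+}$ and $\epsilon > 0$, and set $\delta := \epsilon \cdot P(A) > 0$. By the standard approximation theorem for the $\sigma$-algebra generated by an algebra (outer regularity via the generating algebra), there is a finite disjoint union of cylinders $E = \bigsqcup_{j=1}^{N} B_j$, each $B_j$ of the form $[b]_{-n_j}^{n_j}$, with $P(A \triangle E) < \delta$. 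In particular $P(A \cap E) > P(A) - \delta$ and $P(E \setminus A) < \delta$.

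Next I would argue by contradiction (the pigeonhole step): suppose that for every cylinder $B$ of the stated form we had $P(A \cap B) \le (1-\epsilon) P(B)$. Apply this to each $B_j$ appearing in $E$ and sum over the disjoint pieces:
\[
P(A \cap E) = \sum_{j=1}^{N} P(A \cap B_j) \le (1-\epsilon) \sum_{j=1}^{N} P(B_j) = (1-\epsilon) P(E).
\]
On the other hand $P(E) \le P(A) + P(E \setminus A) < P(A) + \delta$ and $P(A \cap E) > P(A) - \delta$, so combining,
\[
P(A) - \delta < (1-\epsilon)\bigl(P(A) + \delta\bigr) = P(A) - \epsilon P(A) + (1-\epsilon)\delta.
\]
This rearranges to $\epsilon P(A) < \delta + (1-\epsilon)\delta < 2\delta = 2\epsilon P(A)$ — which is not yet a contradiction, so I would sharpen the choice of $\delta$. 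Taking instead $\delta := \tfrac{1}{3}\epsilon P(A)$ (or any small enough multiple), the same computation gives $\epsilon P(A) < \delta + (1-\epsilon)\delta \le 2\delta = \tfrac{2}{3}\epsilon P(A)$, a genuine contradiction since $P(A) > 0$. Hence some cylinder $B = B_{j_0}$ must satisfy $P(A \cap B) > (1-\epsilon) P(B)$, which is exactly the claim (after relabelling $B_{j_0} = [b_{-n},\dots,b_n]_0$).

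The only mild subtlety — and the step I would be most careful about — is the approximation of $A$ by a finite union of cylinders. This is not completely trivial because $\bbX = \{0,1\}^{\bbZ}$ is compact but the cylinders here are centered symmetric blocks $[b]_{-n}^{n}$; one should check that finite unions of such symmetric cylinders still form an algebra generating $\cB(\bbX)$ (they do: any cylinder constraining finitely many coordinates is a finite disjoint union of symmetric ones by padding the constrained range and splitting over the added coordinates). Given that, the approximation is the classical fact that for a measure on the $\sigma$-algebra generated by an algebra $\mathcal{A}$, every measurable set can be approximated in symmetric difference arbitrarily well by a member of $\mathcal{A}$; one cites this and proceeds. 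Everything else is the elementary disjoint-additivity and arithmetic shown above.
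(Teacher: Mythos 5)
Your proof is correct, but it takes a genuinely different route from the paper's. The paper argues via martingale convergence: writing $\Sigma_n$ for the partition of $\bbX$ into cylinders $[b]_{-n}^{n}$ and $\mathcal{F}_n=\sigma(\Sigma_n)\uparrow\cB$, it invokes $E\left(1_A\,\middle|\,\mathcal{F}_n\right)\to 1_A$ a.e., so on a set of positive measure the conditional expectation eventually exceeds $1-\epsilon$, and any atom $B_n$ of $\Sigma_n$ realizing this satisfies $P(A\cap B_n)/P(B_n)>1-\epsilon$. You instead approximate $A$ in symmetric difference by a finite disjoint union $E=\bigsqcup_j B_j$ of cylinders taken from the generating algebra, and then run an averaging/pigeonhole contradiction: if every cylinder had density at most $1-\epsilon$ in $A$, summing over the disjoint pieces of $E$ would force $\epsilon P(A)\le 2\delta$, which fails once $\delta$ is chosen smaller than $\tfrac{1}{2}\epsilon P(A)$ (your choice $\delta=\tfrac{1}{3}\epsilon P(A)$ works, and your remark that cylinders of arbitrary finite range split into symmetric ones after padding correctly disposes of the only technical point). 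Both arguments are complete; yours is more elementary in that it needs only the standard approximation of measurable sets by the generating algebra (Carath\'eodory extension), while the paper's is shorter once one is willing to quote the increasing-martingale (L\'evy upward) convergence theorem, and it has the small aesthetic advantage of directly producing an atom of a single partition $\Sigma_n$ rather than extracting one piece from a disjoint union of cylinders of possibly different radii.
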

\begin{proof}
\begin{onehalfspace}
\begin{flushleft}
Define a partition $\Sigma_{n}=\left\{ Cylinders\ of\ the\ form\ \left[b_{-n},...,b_{n}\right]_{0}\right\} $
. Denote by $F_{n}=\sigma\left(\Sigma_{n}\right)$ the sigma algebra
generated by $\Sigma_{n}$. Then $\mathcal{F}_{n}\uparrow\mathcal{B}$
therefore for every $A\in\mathcal{B}_{+}$ we have \[
E\left(1_{A}\left|\mathcal{F}_{n}\right.\right)\longrightarrow1_{A}\ a.e\ as\ n\to\infty.\]
 This means that if $P(A)>0$ then there exists an $n\in\mathbb{N}$
and an atom of $\Sigma_{n}$ which we will denote by $B_{n}$ for
which \[
E\left(\left.1_{A}\right|B_{n}\right)>1-\epsilon.\]
 But \[
E\left(\left.1_{A}\right|B_{n}\right)=P\left(A\left|B_{n}\right.\right)=\frac{P\left(A\cap B_{n}\right)}{P(B_{n})}.\]
 By setting $B=B_{n}$ the lemma is proved. 
\par\end{flushleft}\end{onehalfspace}

\end{proof}
\begin{proof}
\textit{(Lemma \ref{lem:lambda(t) are essential values}),}

Let $t_{0}\in\bbN$ such that $\lambda_{t_{0}}<1.4$, we will show
that $\lambda_{t_{0}}$ belongs to the ratio set of the shift.

Let $0<\epsilon<0.1$ and $A\in\cB_{+}$ . By lemma \ref{lem:Density of Cylinders}
there exists a cylinder $B=\left[b\right]_{-n}^{n}$ for which \[
P(A\cap B)\geq0.9\cdot P(B)\ .\]
By Lemma \ref{lem: EVC property for the shift} there exists $\fT\in\mathbb{N}$
for which,\[
P\left(\cup_{l=1}^{\frac{m_{\fT}}{N_{\fT}}}\left[B\cap T^{-lN_{\fT}}B\cap\left\{ \left(T^{lN_{\fT}}\right)^{'}(\cdot)\doteq\lambda_{t_{0}}\pm\epsilon\right\} \right]\right)\geq0.9P(B).\]
Set $\mathcal{D}=\cup_{l=1}^{\frac{m_{\fT}}{N_{\fT}}}\left[B\cap T^{-lN_{\fT}}B\cap\left\{ \left(T^{lN_{\fT}}\right)^{'}(\cdot)\doteq\lambda_{t_{0}}\pm\epsilon\right\} \right]$
and define a map \\
$\phi:\mathcal{D}\to\bbN$ by,\[
\phi(w)=\min\left\{ k\in\left\{ lN_{\fT}\right\} _{l=2}^{\frac{m_{\fT}}{N_{\fT}}}:\ T^{k}(w)\in B\ and\ \left(T^{k}\right)'(w)\doteq\lambda_{t_{0}}\pm\epsilon\right\} .\]
Define an automorphism $S:\mathcal{D}\to B$ by \[
S(w)=T^{\phi(w)}(w).\]
Write $\mathfrak{J}$ for the image $S(\mathcal{D}\cap A)$. We have
$\mathcal{D},\mathcal{\mathfrak{J}}\subset B$ , and \[
P(A\cap\mathcal{D})\geq0.8P(B).\]
Moreover $0.9<\left(S\right)'(w)=\lambda_{t_{0}}\pm\epsilon<1.5,$
and therefore \[
P\left(\mathfrak{J}\mathcal{\cap}\left(B\backslash A\right)\right)\leq0.15P\left(B\setminus A)\right)\leq0.15P(B).\]
 Also \[
P\left(\mathfrak{J}\right)\geq0.9P(\mathcal{D}\cap A)\geq0.72P(B).\]
Therefore \[
P\left(\mathfrak{J}\cap A\right)=P\left(\mathfrak{J}\right)-P\left(\mathfrak{J}\cap\left(B\backslash A\right)\right)\geq0.57P(B).\]
This together with $\left(S\right)'(w)<1.5$ means that \[
P\left(S^{-1}\left(\mathfrak{J}\cap A\right)\right)\geq\frac{0.57}{1.5}P(B)\geq0.35P(B).\]
 So \[
\sum_{l=2}^{\frac{m_{\fT}}{N_{\fT}}}P\left(A\cap\left\{ w:\phi(w)=lN_{\fT}\right\} \cap T^{-lN_{\fT}}A\cap\left\{ w:\left(T^{lN_{\fT}}\right)'(w)\doteq\lambda_{t_{0}}\pm\epsilon\right\} \right)\]
\[
\geq P\left(\left(A\cap\mathcal{D}\right)\cap S^{-1}\left(A\cap\mathfrak{J}\right)\right)\geq0.15\cdot P(B).\]
Whence there is an $2\leq l\leq\frac{m_{\fT}}{N_{\fT}}$, so that\[
P\left(A\cap T^{-lN_{\fT}}A\cap\left\{ w:\left(T^{-lN_{\fT}}\right)'(w)=\lambda_{t_{0}}\pm\epsilon\right\} \right)>0.\]
Hence $\lambda_{t_{0}}$ belongs to the ratio set of $T$ . \end{proof}

\end{document}